\newcommand{\bydef}{:=}
\newcommand{\defby}{=:}
\newcommand{\vphi}{\varphi}
\newcommand{\veps}{\varepsilon}
\newcommand{\wh}[1]{\widehat{#1}}
\newcommand{\id}{\mathrm{id}}%identity map
\newcommand{\espan}[1]{\mathrm{span}\left\{#1\right\}}
\newcommand{\tr}{\mathrm{tr}}
\newcommand{\bi}{\mathbf{i}}%imaginary unit
\newcommand{\cA}{\mathcal{A}}%algebras
\newcommand{\cB}{\mathcal{B}}
\newcommand{\cC}{\mathcal{C}}
\newcommand{\cD}{\mathcal{D}}
\newcommand{\cG}{\mathcal{G}}
\newcommand{\cH}{\mathcal{H}}
\newcommand{\cJ}{\mathcal{J}}
\newcommand{\cK}{\mathcal{K}}
\newcommand{\cL}{\mathcal{L}}
\newcommand{\cO}{\mathcal{O}}
\newcommand{\cS}{\mathcal{S}}
\newcommand{\cT}{\mathcal{T}}
\newcommand{\frg}{{\mathfrak g}}%Lie algebras
\newcommand{\frh}{{\mathfrak h}}
\newcommand{\frstr}{\mathfrak{str}}
\newcommand{\ZZ}{\mathbb{Z}}
\newcommand{\FF}{\mathbb{F}}
\DeclareMathOperator{\rad}{\mathrm{rad}}%nilradical
\DeclareMathOperator{\Hom}{\mathrm{Hom}}
\DeclareMathOperator{\End}{\mathrm{End}}
\DeclareMathOperator{\Aut}{\mathrm{Aut}}%automorphism group
\DeclareMathOperator{\inaut}{\mathrm{Int}}%inner automorphisms
\DeclareMathOperator{\Der}{\mathrm{Der}}
\DeclareMathOperator{\Fix}{\mathrm{Fix}}
\DeclareMathOperator{\Int}{\mathrm{int}}
\DeclareMathOperator{\Centr}{\mathrm{Cent}}
\newcommand{\Lie}{\mathrm{Lie}}%Lie functor
\newcommand{\ad}{\mathrm{ad}}
\newcommand{\Gl}{\mathfrak{gl}}
\newcommand{\Sl}{\mathfrak{sl}}
\newcommand{\Sp}{\mathfrak{sp}}
\newcommand{\frsl}{{\mathfrak{sl}}}
\newcommand{\frsp}{{\mathfrak{sp}}}
\newcommand{\frso}{{\mathfrak{so}}}
\newcommand{\frgl}{{\mathfrak{gl}}}
\newcommand{\tri}{\mathfrak{tri}}
\newcommand{\GL}{\mathrm{GL}}
\newcommand{\SL}{\mathrm{SL}}
\newcommand{\PSL}{\mathrm{PSL}}
\newcommand{\PGL}{\mathrm{PGL}}
\newcommand{\SP}{\mathrm{Sp}}
\newcommand{\PSP}{\mathrm{PSp}}
\newcommand{\Ind}{\mathrm{Ind}}
\newcommand{\subo}{_{\bar 0}}
\newcommand{\subuno}{_{\bar 1}}
\newcommand{\fw}{\pi}
\newtheorem{theorem}{Theorem}
\newtheorem{proposition}[theorem]{Proposition}
\newtheorem{lemma}[theorem]{Lemma}
\newtheorem{corollary}[theorem]{Corollary}
\theoremstyle{definition}
\newtheorem{remark}[theorem]{Remark}
\numberwithin{theorem}{section}
\numberwithin{equation}{section}
\newenvironment{romanenumerate}
 {\begin{enumerate}
 
 }{\end{enumerate}}
\begin{document}

\title{Gradings on modules over Lie algebras of E types}

\author[C.~Draper]{Cristina Draper${}^*$}
\address{Departamento de Matem\'{a}tica Aplicada, Escuela de las Ingenier\'{\i}as, Universidad de M\'{a}laga,
Ampliaci\'{o}n Campus de Teatinos, 29071 M\'{a}laga, Spain}
\email{cdf@uma.es}
\thanks{${}^*$ Supported by the Spanish Ministerio de Econom\'{\i}a y Competitividad---Fondo Europeo de
Desarrollo Regional (FEDER) MTM2013-41208-P, and by the Junta de Andaluc\'{\i}a grants FQM-336 and FQM-7156, with FEDER funds}

\author[A.~Elduque]{Alberto Elduque${}^\star$}
\address{Departamento de Matem\'{a}ticas
 e Instituto Universitario de Matem\'aticas y Aplicaciones,
 Universidad de Zaragoza, 50009 Zaragoza, Spain}
\email{elduque@unizar.es}
\thanks{${}^\star$ Supported by the Spanish Ministerio de Econom\'{\i}a y Competitividad---Fondo Europeo de Desarrollo Regional (FEDER) MTM2013-45588-C3-2-P, and by the Diputaci\'on General de Arag\'on---Fondo Social Europeo (Grupo de Investigaci\'on de \'Algebra)}

\author[M. Kochetov]{Mikhail Kochetov${}^\dagger$}
\address{Department of Mathematics and Statistics,
 Memorial University of Newfoundland,
 St. John's, NL, A1C5S7, Canada}
\email{mikhail@mun.ca}
\thanks{${}^\dagger$ Supported by Discovery Grant 341792-2013 of the Natural Sciences and Engineering Research Council (NSERC) of Canada}

\subjclass[2010]{Primary 16W50; Secondary 17B70, 17B25, 17B60}

\keywords{Graded; simple; module; exceptional Lie algebra; graded Brauer invariant}

\date{}

\begin{abstract}
For any grading by an abelian group $G$ on the exceptional simple Lie algebra $\cL$ of type $E_6$ or $E_7$ over an algebraically closed field of characteristic zero, we compute the graded Brauer invariants of simple finite-dimensional modules, thus completing the computation of these invariants for simple finite-dimensional Lie algebras. This yields the classification of finite-dimensional $G$-graded simple $\cL$-modules, as well as necessary and sufficient conditions for a finite-dimensional $\cL$-module to admit a $G$-grading compatible with the given $G$-grading on $\cL$.
\end{abstract}

\maketitle

%\setlength{\unitlength}{1mm}

%----------------------------------------------------------------------------------------------------------------------
%SECTION 1: INTRODUCTION
%----------------------------------------------------------------------------------------------------------------------

\section{Introduction}\label{se:intro}

Gradings have always played an important r\^{o}le in the theory of Lie algebras and their representations. It is sufficient to mention the root space decomposition of a complex semisimple Lie algebra with respect to a Cartan subalgebra, which is a grading by a free abelian group (namely, the root lattice) and will be referred to as the Cartan grading (being unique up to equivalence of gradings --- see below). Recently, there has been a considerable interest in gradings on Lie algebras by arbitrary groups, motivated in part by Lie theory itself and in part by potential applications in mathematical physics (see our monograph \cite{EK_mon} and the references therein).

We recall that a \emph{grading} on a Lie algebra $\cL$ by a group $G$ is a vector space decomposition $\Gamma:\cL=\bigoplus_{g\in G}\cL_g$ such that $[\cL_g,\cL_h]\subseteq\cL_{gh}$ for all $g,h\in G$. The subspaces $\cL_g$ are referred to as the \emph{homogeneous components}, the elements of $\cL_g$ as \emph{homogeneous elements of degree $g$}, and the set $S=\{g\in G:\cL_g\ne 0\}$ as the \emph{support} of the grading (or of the $G$-graded algebra $\cL$). Clearly, these definitions make sense for arbitrary nonassociative (i.e., not necessarily associative) algebras. In the context of Lie algebras, it is natural to assume $G$ abelian, as in this case any $G$-grading on $\cL$ uniquely extends to the universal enveloping algebra $U(\cL)$. Also, if $\cL$ is simple then the elements of the support $S$ commute with one another. Therefore, throughout this article we will assume all grading groups to be abelian.

There are two natural equivalence relations on gradings, depending on whether the direct sum decomposition alone is important to us (so any group realizing it will do) or the group $G$ is important as well. Two gradings, $\cL=\bigoplus_{g\in G}\cL_g$ and $\cL=\bigoplus_{h\in H}\cL'_h$, are said to be \emph{equivalent} if there exists an automorphism $\vphi$ of $\cL$ and a bijection between the supports $\alpha:S\to S'$ such that $\vphi(\cL_g)=\cL'_{\alpha(g)}$ for all $g\in S$. Two $G$-gradings, $\cL=\bigoplus_{g\in G}\cL_g$ and $\cL=\bigoplus_{g\in G}\cL'_g$, are \emph{isomorphic} if there exists an automorphism $\vphi$ of $\cL$ such that $\vphi(\cL_g)=\cL'_g$ for all $g\in G$ (in other words, these gradings yield isomorphic $G$-graded algebras). 

Among all groups realizing a given grading on $\cL$, there is the universal one (unique up to isomorphism): it is generated by the support $S$ subject only to the relations $s_1s_2=s_3$ for all $s_1,s_2,s_3\in S$ such that $0\ne[\cL_{s_1},\cL_{s_2}]\subseteq\cL_{s_3}$. In the case of equivalent gradings, as above, the bijection $\alpha$ extends to a unique isomorphism of the universal grading groups.

The so-called \emph{fine gradings} (i.e., those that do not admit a proper refinement) are of fundamental importance. The Cartan grading mentioned above is one of them. The classification of fine gradings up to equivalence has recently been completed for all finite-dimensional simple Lie algebras over an algebraically closed field of characteristic $0$  --- see \cite[Chapters 3--6]{EK_mon} and also \cite{Yu_E,E_char0,DE_over}. In particular, there are $14$ fine gradings on each of the simple Lie algebras of types $E_6$, $E_7$ and $E_8$.

For a fixed abelian group $G$, the classification of $G$-gradings up to isomorphism is also known for the classical simple Lie algebras of all types except $E$ (over an algebraically closed field of characteristic different from $2$). The details can be found in \cite{BK_classical,EK_mon} and in \cite{EK_D4} for the special case $D_4$. For the purposes of this article, it is sufficient to note that every $G$-grading $\Gamma$ is a coarsening of at least one fine grading $\Delta$, so it is induced by a homomorphism $\nu:G^u\to G$, where $G^u$ is the universal group of $\Delta$. This means that $\Gamma$ is obtained by assigning the degree $\nu(s)\in G$ to all elements of $\cL$ that are homogeneous of degree $s\in G^u$ with respect to $\Delta$. Thus, we have a description of all $G$-gradings for the $E$ types as well (at least in characteristic $0$), although the isomorphism problem for these gradings remains open.

Let $\cL$ be a semisimple Lie algebra over an algebraically closed field $\FF$ of characteristic $0$. Suppose $\cL$ is graded by an abelian group $G$ (which can be assumed, without loss of generality, to be finitely generated) and let $V$ be an $\cL$-module. A \emph{compatible $G$-grading} on $V$ is a vector space decomposition $V=\bigoplus_{g\in G} V_g$ such that $\cL_g V_h\subseteq V_{gh}$ for all $g,h\in G$. If such a grading on $V$ is fixed, $V$ is said to be a \emph{graded $\cL$-module}. For example, if we take the Cartan grading on $\cL$ then every finite-dimensional module admits a compatible grading. Note that if a grading $V=\bigoplus_{g\in G} V_g$ is compatible then so is any of its \emph{shifts}, i.e., any grading obtained by picking $h\in G$ and declaring the elements of $V_g$ to be of degree $hg$, for all $g\in G$. 

Finite-dimensional $G$-graded $\cL$-modules for semisimple Lie algebras have been studied in \cite{EK_Israel} and \cite[Appendix~A]{EK_D4}. Every such module is a direct sum of \emph{graded simple} modules, i.e., $G$-graded modules that do not contain any proper nonzero graded submodule. Hence, if we know all finite-dimensional graded simple modules then we can also determine which finite-dimensional $\cL$-modules admit a compatible $G$-grading. 

\smallskip

\emph{Throughout the paper, all Lie algebras and modules considered will be assumed to be of finite dimension.}

\smallskip

Graded simple modules can be expressed in terms of simple $\cL$-modules by means of a suitable version of Clifford theory, as will be briefly explained in Section \ref{se:graded_modules} following \cite{EK_Israel,EK_D4} (see also \cite{MZ,EK_loop} for an alternative expression as loop modules, which works even in the infinite-dimensional case). For now, we only note that there are two obstacles for a simple $\cL$-module to admit a compatible $G$-grading. The first obstacle is related to the action of the outer automorphism group of $\cL$ on the isomorphism classes of simple modules. For the simple module $V(\lambda)$ of highest weight $\lambda$, this effect is measured by what we call the \emph{inertia group} of $\lambda$. The second obstacle is related to the fundamental group of the inner automorphism group of $\cL$ (in other words, the center of the corresponding simply connected group). It is measured by what we call the \emph{graded Brauer invariant} of $\lambda$ and, in particular, its degree, which we call the \emph{graded Schur index} of $\lambda$. These invariants have been explicitly computed in the above-mentioned works for all simple Lie algebras with the exception of the $E$ types. Since the automorphism group of the Lie algebra of type $E_8$ is connected and simply connected, there is nothing to do in this case: all finite-dimensional modules admit a compatible grading and graded simple modules are in fact simple. (The same, by the way, holds for $F_4$ and $G_2$.) Our present work is dedicated to the remaining two cases: $E_6$ and $E_7$. We will show that, for a half of the fine gradings on these Lie algebras (more precisely, $11$ out of $14$ for $E_6$ and $3$ out of $14$ for $E_7$), all graded Brauer invariants are trivial. (This can also be deduced from \cite{Yu_E}.) For the remaining fine gradings, some of the graded Brauer invariants are nontrivial, but their degree (i.e., the graded Schur index) turns out to be small: $2$ in the case of $E_7$ and $3$ in the case of $E_6$. It follows that, for any $G$-grading on the Lie algebra of type $E_7$, a graded simple module is either simple or direct sum of two copies of a simple module. For type $E_6$, we will show that a graded simple module can have at most three simple summands (more precisely, it is either simple or sum of two non-isomorphic simple modules that are dual to one another or sum of three copies of a simple module). Thus, in this context, the  behavior of the exceptional simple Lie algebras is remarkably different from the typical behavior in the series $A$, $B$, $C$ and $D$. Indeed, the results in \cite{EK_Israel,EK_D4} show that the maximum value of the graded Schur index is $r+1$ for type $A_r$, $2^r$ for type $B_r$, $2^{r-1}$ for type $D_r$ and $2^{k+1}$ for type $C_r$, where $k=\mathrm{ord}_2(r)$ (the greatest integer such that $2^k$ divides $r$).

The paper is organized as follows. After reviewing the background and introducing notation concerning graded modules in Section \ref{se:graded_modules}, we will establish some auxiliary results on algebraic groups in Section \ref{se:prelim}. In Section \ref{se:E6}, we consider the graded Brauer invariants for type $E_6$: we first show that they are all trivial in the case of the outer fine gradings (Theorem \ref{th:E6_outer}) and then explicitly compute them for the inner fine gradings (Theorem \ref{th:E6_inner}). Finally, in Section \ref{se:E7}, we compute the graded Brauer invariants for type $E_7$. In particular, we use the well known model of the Lie algebra of type $E_7$ in terms of $\Sl_8(\FF)$ (given e.g. in \cite{Adams}) and develop its version in terms of $\Sp_8(\FF)$, which may be of independent interest.

%----------------------------------------------------------------------------------------------------------------------
%SECTION 2: BACKGROUND ON GRADED MODULES
%----------------------------------------------------------------------------------------------------------------------

\section{Background on graded modules}\label{se:graded_modules}

Let $\cL$ be a semisimple Lie algebra over an algebraically closed field $\FF$ of characteristic $0$. These conditions on the ground field $\FF$ will be assumed throughout the paper without further mention. Any $G$-grading on $\cL$ corresponds to a homomorphism of algebraic groups $\wh{G}\rightarrow \Aut(\cL)$, $\chi\mapsto \alpha_\chi$, where $\wh{G}=\Hom(G,\FF^\times)$ is the group of characters (a quasitorus), so that the homogeneous components are (joint) eigenspaces for the automorphisms $\alpha_\chi$, i.e., $\cL_g=\{ x\in \cL:\alpha_\chi(x)=\chi(g)x\ \forall \chi\in \wh{G}\}$. 

Now we outline the theory of (finite-dimensional) graded $\cL$-modules following \cite{EK_Israel}. First of all, the group $\wh{G}$ permutes (isomorphism classes of) $\cL$-modules by \emph{twisting}: for $\chi\in\wh{G}$ and an $\cL$-module $V$, the twisted module $V^\chi$ coincides with $V$ as a vector space but has a different $\cL$-action: $x\cdot_\chi v=\alpha_\chi(x)v$ for all $x\in\cL$ and $v\in V$. It is easy to check that if $V$ admits a compatible $G$-grading then the linear map sending each $v\in V_g$ to $\chi(g)v$, for all $g\in G$, is an isomorphism of $\cL$-modules $V\to V^\chi$, so the twisted module $V^\chi$ is isomorphic to $V$ for any $\chi\in\wh{G}$.

Fix a Cartan subalgebra and a system of simple roots for $\cL$, and let $\Lambda^+$ denote the set of dominant integral weights. Since $\wh{G}$ acts on the isomorphism classes of irreducible $\cL$-modules, it also acts on $\Lambda^+$. (Explicitly, the image of $\alpha_\chi$ in the outer automorphism group of $\cL$ determines an automorphism of the Dynkin diagram and hence a permutation of dominant integral weights.) For any $\lambda \in\Lambda^+$, the \emph{inertia group} is 
\[
K_\lambda=\{\chi\in \wh{G}: \chi\ \text{fixes}\ \lambda\}.
\]
If $V(\lambda)$, the simple $\cL$-module with highest weight $\lambda$, admits a compatible $G$-grading then we have $K_\lambda=\wh{G}$.
In general, $K_\lambda$ is Zariski closed in $\wh{G}$ and $[\wh{G}:K_\lambda]$ is finite. Therefore, 
\[
H_\lambda\bydef (K_\lambda)^\perp=\{g\in G: \chi(g)=1\ \forall \chi\in K_\lambda\}
\]
is a finite subgroup of $G$ of size $\lvert H_\lambda\rvert=\lvert \wh{G}\lambda\rvert$, and $K_\lambda$ is canonically isomorphic to the group of characters of $G/H_\lambda$, so that an action of $K_\lambda$ corresponds to a $(G/H_\lambda)$-grading.

We will also need the $G$-graded Brauer group of $\FF$ (as defined in \cite{PP}), which we denote by $B_G(\FF)$. It consists of the equivalence classes of finite-dimensional central simple associative $\FF$-algebras equipped with a $G$-grading, with product induced by the tensor product over $\FF$. Since we assume $\FF$ algebraically closed, the group $B_G(\FF)$ has an easy interpretation: the elements of $B_G(\FF)$ can be represented by certain alternating bicharacters $\hat{\beta}\colon\wh{G}\times \wh{G}\to\FF^\times$ and the operation of $B_G(\FF)$ corresponds to the point-wise multiplication of bicharacters (see \cite[\S 2]{EK_Israel}). By analogy with linear algebra, we will say that a subgroup $Q\subset\wh{G}$ is \emph{isotropic} with respect to $\hat{\beta}$ if the restriction of $\hat{\beta}$ to $Q\times Q$ is trivial: $\hat{\beta}(Q,Q)=1$, and we define the \emph{radical} of $\hat{\beta}$ by $\rad\hat{\beta}=\{\chi\in\wh{G}:\hat{\beta}(\chi,\wh{G})=1\}$. For any $\hat{\beta}$ representing an element of $B_G(\FF)$, the radical has the form $T^\perp$ where $T$ is a finite subgroup of $G$, and $\hat{\beta}$ corresponds to a nondegenerate alternating bicharacter $\beta\colon T\times T\to\FF^\times$ by means of duality: $\beta(t_1,t_2)=\hat\beta(\chi_1,\chi_2)$ where $\chi_i$ is any character such that $\hat\beta(\psi,\chi_i)=\psi(t_i)$ for all $\psi\in\wh{G}$ ($i=1,2$). The pair $(T,\beta)$ carries the same information as $\hat{\beta}$, but this point of view has a disadvantage: one cannot define multiplication of such pairs directly. On the other hand, it has the advantage of an immediate connection with the original definition of the elements of $B_G(\FF)$ as equivalence classes: the unique graded division algebra $\cD$ in the Brauer class represented by $\hat{\beta}$ is isomorphic to the twisted group algebra $\FF^\sigma T$ (with its natural $T$-grading regarded as a $G$-grading), where $\sigma\colon T\times T\to\FF^\times$ is any $2$-cocycle satisfying $\beta(s,t)=\sigma(s,t)/\sigma(t,s)$ for all $s,t\in T$. Note that $T$ is the support of the grading on $\cD$, so we will refer to $T$ as the \emph{support of the Brauer class $[\cD]$}. As an algebra, $\cD$ is isomorphic to $M_\ell(\FF)$ where $|T|=\ell^2$; this $\ell$ will be referred to as the \emph{degree} of $[\cD]$.

For example, suppose $G$ contains a subgroup $T=\langle a,b\rangle\simeq\ZZ_\ell^2$. Pick a primitive $\ell$-th root of unity $\veps\in\FF$ and consider the following elements of $M_\ell(\FF)$:
\begin{equation}\label{Pauli}
X=\begin{bmatrix}
\veps^{\ell-1} & 0                 & 0           & \ldots      & 0     & 0\\
0           & \veps^{\ell-2}       & 0           & \ldots      & 0     & 0\\
\ldots      &                   &             &             &       &  \\[3pt]
0           & 0                 & 0           & \ldots      & \veps & 0\\
0           & 0                 & 0           & \ldots      & 0     & 1
\end{bmatrix}\mbox{ and }
Y=\begin{bmatrix}
0 & 1 & 0 & \ldots & 0 & 0\\
0 & 0 & 1 & \ldots & 0 & 0\\
\ldots & & & & \\[3pt]
0 & 0 & 0 & \ldots & 0 & 1\\
1 & 0 & 0 & \ldots & 0 & 0
\end{bmatrix},
\end{equation}
which we will call \emph{generalized Pauli matrices}. Clearly, $X^\ell=Y^\ell=I$ and $XY=\veps YX$, so the monomials $X^iY^j$, $0\le i,j<\ell$, constitute a basis of $M_\ell(\FF)$. Therefore, we can define a $G$-grading on $M_\ell(\FF)$, called a \emph{Pauli grading}, by assigning degree $a^ib^j$ to the monomial $X^iY^j$, which will be denoted by $X_{a^ib^j}$. Thus $M_\ell(\FF)$ becomes a graded division algebra with support $T$. Since $X_{a^ib^j}X_{a^{i'}b^{j'}}=\veps^{-i'j}X_{a^{i+i'}b^{j+j'}}$, the graded algebra $M_\ell(\FF)$ is isomorphic to $\FF^\sigma T$ where $\sigma(a^ib^j,a^{i'}b^{j'})=\veps^{-i'j}$, which gives $\beta(a^ib^j,a^{i'}b^{j'})=\veps^{ij'-i'j}$.

Now consider the simple $\cL$-module $V(\lambda)$ and the corresponding representation $\rho\colon U(\cL)\rightarrow \End_\FF(V(\lambda))$ of the universal enveloping algebra. As mentioned in the introduction, there may be two obstructions for $V(\lambda)$ to admit a compatible $G$-grading. The first and obvious one is the nontriviality of $H_\lambda$. The second is more subtle. There always exists a homomorphism of algebraic groups
\[
\begin{split}
K_\lambda&\longrightarrow \Aut\bigl(\End_\FF(V(\lambda))\bigr),\\
\chi&\mapsto \ \Int(u_\chi),
\end{split}
\]
where $\Int(u)$ denotes the inner automorphism $x\mapsto uxu^{-1}$ of $\End_\FF(V(\lambda))$, for $u\in\GL(V(\lambda))$, and $u_\chi$ is chosen to satisfy $\Int(u_\chi)\bigl(\rho(x)\bigr)=\rho\bigl(\alpha_\chi(x)\bigr)$ for all $x\in\cL$. This corresponds to a unique $(G/H_\lambda)$-grading on $\End_\FF(V(\lambda))$ such that $\rho$ becomes a homomorphism of $(G/H_\lambda)$-graded algebras. Here the $(G/H_\lambda)$-grading on $\cL$ is given by $\cL_{gH_\lambda}=\bigoplus_{h\in H_\lambda}\cL_{gh}$ for all $g\in G$ (the coarsening of the $G$-grading induced by the natural homomorphism $G\to G/H_\lambda$), and the grading on $U(\cL)$ is extended from $\cL$.
The class $[\End_\FF(V(\lambda))]$ in the $(G/H_\lambda)$-graded Brauer group $B_{G/H_\lambda}(\FF)$ is called the \emph{graded Brauer invariant} of $\lambda$, and the degree of the unique graded division algebra in this class is called the \emph{graded Schur index} of $\lambda$. It is the smallest natural number $k$ such that the direct sum $V(\lambda)^k$ of $k$ copies of $V(\lambda)$ admits a compatible $(G/H_\lambda)$-grading. 
The alternating bicharacter $\hat{\beta}_\lambda\colon K_\lambda\times K_\lambda\to\FF^\times$ representing the graded Brauer invariant of $\lambda$ is given by
\[
\hat{\beta}_\lambda(\chi_1,\chi_2)=u_{\chi_1}u_{\chi_2}u_{\chi_1}^{-1}u_{\chi_2}^{-1}\defby [u_{\chi_1},u_{\chi_2}]
\quad\forall \chi_1,\chi_2\in K_\lambda.
\]
(Note that, since $\Int(u_{\chi_1})$ and $\Int(u_{\chi_2})$ commute, $u_{\chi_1}$ and $u_{\chi_2}$ commute up to a scalar, which depends on $\chi_1$ and $\chi_2$ but not on the choice of $u_{\chi_1}$ and $u_{\chi_2}$.)

For each $\wh{G}$-orbit $\cO$ in $\Lambda^+$, select a representative $\lambda$. Let $k$ be the graded Schur index of $\lambda$, so that $V(\lambda)^k$ has a compatible $(G/H_\lambda)$-grading (unique up to isomorphism and shift). Finally, let $W(\cO)$ be the induced module:
\[
W(\cO)= \Ind_{K_\lambda}^{\wh G}\bigl(V(\lambda)^k\bigr)\bydef \FF\wh{G}\otimes_{\FF K_\lambda}V(\lambda)^k.
\]
(Here $\FF \wh{G}$ denotes the group algebra of $\wh{G}$, and similarly for $\FF K_\lambda$.) Then, by \cite[Theorem 8]{EK_Israel}, up to isomorphism and shift of the grading, these $W(\cO)$ are precisely the (finite-dimensional) $G$-graded simple $\cL$-modules.

In this way, the graded simple modules are obtained explicitly in terms of simple modules. The inertia groups, graded Brauer invariants and Schur indices have been computed for the classical simple Lie algebras in \cite{EK_Israel} and \cite[Appendix~A]{EK_D4}. In the latter, an interpretation of graded Brauer invariants in terms of algebraic groups is given as follows. 

Denote by $\cG$ any semisimple algebraic group with $\Lie(\cG)=\cL$. Among these, we denote by $\bar{\cG}$ the adjoint group and by $\tilde{\cG}$ the simply connected group, so we have quotient maps $\tilde{\cG}\to\cG\to\bar{\cG}$. It is well known that $Z(\tilde{\cG})$, the kernel of $\tilde{\cG}\to\bar{\cG}$, is isomorphic to the group of characters of the finite abelian group $\Lambda/\Lambda^\mathrm{r}$, where $\Lambda$ is the weight lattice and $\Lambda^\mathrm{r}$ is the root lattice of $\cL$. Recall also that $\Aut(\cL)$ is isomorphic to the semidirect product $\bar\cG\rtimes\Aut(\mathrm{Dyn})$, where $\mathrm{Dyn}$ is the Dynkin diagram of $\cL$. Let $S_\lambda$ be the stabilizer of $\lambda$ in $\Aut(\mathrm{Dyn})$. Then the inertia group $K_\lambda$ is the inverse image of the subgroup $\bar{\cG}\rtimes S_\lambda$ of $\bar\cG\rtimes\Aut(\mathrm{Dyn})\simeq\Aut(\cL)$ under the homomorphism $\chi\mapsto\alpha_\chi$. The representation $\rho\colon\cL\to\Gl(V(\lambda))$ ``integrates'' to $\tilde{\cG}$ and then extends to $\tilde{\rho}\colon\tilde{\cG}\rtimes S_\lambda\to\GL(V(\lambda))$.

Let $\pi\colon\tilde\cG\rtimes\Aut(\mathrm{Dyn})\rightarrow \bar\cG\rtimes\Aut(\mathrm{Dyn})$ be the quotient map, and consider preimages $\tilde\alpha_\chi$ in $\tilde\cG\rtimes\Aut(\mathrm{Dyn})$ of $\alpha_\chi$ for all $\chi\in\widehat{G}$. Since the commutator $[\alpha_{\chi_1},\alpha_{\chi_2}]$ is trivial, we have $[\tilde\alpha_{\chi_1},\tilde\alpha_{\chi_2}]\in Z(\tilde\cG)$ for all $\chi_1,\chi_2\in\widehat{G}$.
Since the action of the algebraic group $\tilde\cG\rtimes\Aut(\mathrm{Dyn})$ on $\cL$ is the adjoint action, we have 
$\rho\bigl(\alpha_\chi(x)\bigr)=\tilde\rho(\tilde\alpha_\chi)\rho(x)\tilde\rho(\tilde\alpha_\chi)^{-1}$,
for any $\chi\in K_\lambda$ and $x\in\cL$, so we can choose $u_\chi=\tilde{\rho}(\tilde{\alpha}_\chi)$. 
Finally, the elements in the center $Z(\tilde\cG)$ act by scalars on $V(\lambda)$, so there is an associated homomorphism 
\[
\Psi_\lambda\colon Z(\tilde\cG)\rightarrow \FF^\times.
\] 
As the result of this discussion, we obtain
\begin{equation}\label{eq:new}
\hat{\beta}_\lambda(\chi_1,\chi_2)=\Psi_\lambda\bigl([\tilde\alpha_{\chi_1},\tilde\alpha_{\chi_2}]\bigr).
\end{equation}
In particular, if $\lambda\in\Lambda^\mathrm{r}$ then $\Psi_\lambda$ is trivial and hence the graded Brauer invariant of $\lambda$ is trivial, too. Therefore, if the group $\Aut(\cL)$ is connected and simply connected then all finite-dimensional $\cL$-modules admit compatible $G$-gradings and all (finite-dimensional) graded simple  modules are simple. It was pointed out in \cite{EK_D4} that this is the case for the exceptional simple Lie algebras $G_2$, $F_4$ and $E_8$.

As stated in the introduction, the aim of this paper is to compute the graded Brauer invariants for $G$-gradings on the remaining simple Lie algebras $E_6$ and $E_7$. Each of these algebras has $14$ maximal quasitori (up to conjugation) in its automorphism group, which give rise to $14$ fine gradings (up to equivalence). As already mentioned, every $G$-grading is obtained by means of at least one homomorphism $G^u\to G$ where $G^u$ is the universal group of a fine grading, which can be identified with the group of characters of the corresponding maximal quasitorus $Q$. Upon this identification, the dual homomorphism $\wh{G}\to Q$ is precisely the map $\chi\mapsto\alpha_{\chi}$, so it sends $K_\lambda$ to $Q_\lambda\bydef Q\cap(\bar{\cG}\rtimes S_\lambda)$. From Equation \eqref{eq:new}, we see that the alternating bicharacter $\hat{\beta}_\lambda\colon K_\lambda\times K_\lambda\to\FF^\times$ representing the graded Brauer invariant of a given $\lambda\in\Lambda^+$ with respect to the $G$-grading is just the composition of the map $K_\lambda\times K_\lambda\to Q_\lambda\times Q_\lambda$ with the alternating bicharacter 
\begin{equation}\label{eq:bichar_for_Q}
\hat{\beta}_\lambda^Q\colon Q_\lambda\times Q_\lambda\to\FF^\times,\quad (x,y)\mapsto \Psi_\lambda\bigl([\tilde{x},\tilde{y}]\bigr),
\end{equation}
where $\tilde x$ and $\tilde y$ are arbitrary preimages of $x$ and $y$. This latter bicharacter $\hat{\beta}_\lambda^Q$ represents the graded Brauer invariant of the same $\lambda$, but with respect to the $G^u$-grading on $\cL$. 
Therefore, it suffices to compute the Brauer invariants for fine gradings.

%----------------------------------------------------------------------------------------------------------------------
%SECTION 3: PRELIMINARIES ON ALGEBRAIC GROUPS
%----------------------------------------------------------------------------------------------------------------------

\section{Preliminaries on algebraic groups}\label{se:prelim}

First of all, we observe that, for our purposes, it will be sufficient to deal with quasitori in connected algebraic groups. Indeed, in the case of $E_7$, the Dynkin diagram has trivial automorphism group, while in the case of $E_6$, this automorphism group has order $2$ and acts nontrivially on $\Lambda/\Lambda^\mathrm{r}\simeq\ZZ_3$, so the only $\lambda\in\Lambda^+$ with nontrivial stabilizer $S_\lambda$ are those in $\Lambda^\mathrm{r}$, which have trivial graded Brauer invariants. For the remaining $\lambda$, the quasitorus $Q_\lambda$ coincides with $Q_\mathrm{in}\bydef Q\cap\inaut(\cL)$ and thus is contained in the connected group $\inaut(\cL)$. The gradings whose quasitori are contained in $\inaut(\cL)$ will be referred to as \emph{inner} and all other gradings as \emph{outer}.

We will make use of the following results on semisimple algebraic groups. As before, let $\tilde{\cG}$ be the simply connected group corresponding to a semisimple algebraic group $\cG$, let $\pi\colon\tilde{\cG}\to\cG$ be the canonical projection and let $Z$ be the kernel of $\pi$, which is a subgroup of $Z(\tilde{\cG})$. (We will be primarily interested in the case of the adjoint group, where $Z=Z(\tilde{\cG})$.) For a quasitorus $P\subset\cG$, we define an alternating bihomomorpism
\begin{equation}\label{eq:def_beta_quasitorus}
\hat{\beta}^P\colon P\times P\to Z,\quad (x,y)\mapsto [\tilde{x},\tilde{y}],
\end{equation}
where $\tilde{x}$ and $\tilde{y}$ are preimages of $x$ and $y$ under $\pi$. Note that, if $Q_\lambda\subset\inaut(\cL)$  (in particular, if $S_\lambda$ is trivial) then $\hat{\beta}_\lambda^Q$ in Equation \eqref{eq:bichar_for_Q} is just the composition of $\hat{\beta}^{Q_\lambda}$ and $\Psi_\lambda$. By analogy with bicharacters, we will speak about the radical of $\hat{\beta}^P$ and isotropic subgroups of $P$ with respect to $\hat{\beta}^P$.

For a prime $p$, by the $p$-{\em torsion} of an abelian group we will mean the subgroup consisting of all $p$-elements (i.e., the elements whose order is a power of $p$).

\begin{lemma}\label{lm:1}
Let $P\subset\cG$ be a quasitorus, so $P=P_0\times P_1$ where $P_0$ is a torus (the connected component of $P$) and $P_1$ is a finite abelian group. Write $P_1=P_1'\times P_1''$ where $P_1'$ is the product of the $p$-torsion subgroups of $P_1$ for all primes $p$ dividing $|Z|$. Then $\hat{\beta}^P(P,P_0\times P_1'')=1$ and $\pi^{-1}(P_0\times P_1'')$ is a quasitorus.
\end{lemma}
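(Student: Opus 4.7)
The plan is to exploit two basic facts: the commutator map $(\tilde x,\tilde y)\mapsto [\tilde x,\tilde y]$ into $Z$ is bimultiplicative when restricted to preimages of commuting elements (because the values lie in the centre), and in characteristic zero an abelian algebraic group that contains no nontrivial unipotent element is a quasitorus. Let $n=|Z|$.

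First I would prove $\hat{\beta}^P(P,P_0\times P_1'')=1$. Fix $x\in P$ and let $\tilde x$ be a preimage. For any $y\in P$ and any preimage $\tilde y$, the element $[\tilde x,\tilde y]$ lies in $Z\subset Z(\tilde{\cG})$, and therefore depends only on $x,y$ (not on the choice of lifts) and satisfies the bimultiplicativity $[\tilde x,\tilde y_1\tilde y_2]=[\tilde x,\tilde y_1]\,[\tilde x,\tilde y_2]$. In particular $[\tilde x,\tilde z^{\,n}]=[\tilde x,\tilde z]^{\,n}=1$ because $[\tilde x,\tilde z]\in Z$ has order dividing $n$. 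Now for $y\in P_0$: since $P_0$ is a torus it is $n$-divisible, so we may write $y=z^n$ with $z\in P_0$, and taking $\tilde y=\tilde z^{\,n}$ yields $\hat{\beta}^P(x,y)=1$. For $y\in P_1''$: by definition of $P_1''$, $m\bydef\mathrm{ord}(y)$ is coprime to $n$, so we can find integers $a,b$ with $am+bn=1$, whence $y=(y^b)^n$ and the same argument applies with $z=y^b$.

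Next I would show that $\tilde P\bydef\pi^{-1}(P_0\times P_1'')$ is a quasitorus. Closedness is automatic. For commutativity, take $\tilde x,\tilde y\in\tilde P$: both $\pi(\tilde x)$ and $\pi(\tilde y)$ lie in $P_0\times P_1''\subseteq P$, so by what we have just proved the commutator $[\tilde x,\tilde y]=\hat{\beta}^P(\pi(\tilde x),\pi(\tilde y))$ equals $1$. Thus $\tilde P$ is an abelian closed subgroup of $\tilde{\cG}$.

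Finally I would verify that $\tilde P$ contains no nontrivial unipotent element. If $\tilde g\in\tilde P$ is unipotent, its image $\pi(\tilde g)$ is unipotent in the quasitorus $P_0\times P_1''$, hence $\pi(\tilde g)=1$ and $\tilde g\in Z$. Since $Z$ is finite and we are in characteristic zero, a unipotent element of finite order must be trivial, so $\tilde g=1$. An abelian linear algebraic group over a field of characteristic zero is the direct product of its maximal quasitorus and its unipotent radical; the absence of nontrivial unipotent elements therefore forces $\tilde P$ to be a quasitorus, which completes the argument. The only subtlety worth double-checking is the bimultiplicativity step (that $[\tilde x,\tilde z^n]=[\tilde x,\tilde z]^n$), but once one observes that $[\tilde x,\tilde z]$ is central in $\tilde{\cG}$ this is immediate, so no serious obstacle arises.
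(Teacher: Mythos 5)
Your argument is correct, but it takes a genuinely different route from the paper's. You unify both parts of the vanishing claim through one mechanism: since the commutator $[\tilde x,\tilde y]$ lies in the central subgroup $Z$, the pairing is well defined and bimultiplicative, so $\hat{\beta}^P(x,z^n)=\hat{\beta}^P(x,z)^n=1$ because $Z$ has exponent dividing $n=|Z|$; then $n$-divisibility of the torus $P_0$ and the B\'ezout relation for an element of order coprime to $n$ reduce everything to this observation. The paper instead handles the two factors by two separate structural arguments: for $P_0$ it uses that the commutator is a morphism $\pi^{-1}(P)\times\pi^{-1}(P)\to Z$ with connected source (on the identity component $S$ of $\pi^{-1}(P_0)$) and finite target, forcing constancy and hence triviality; for $P_1''$ it uses the coprime-order splitting $\pi^{-1}(P_1'')=Z\times\tilde P_1''$, notes that $\tilde P_1''$ is characteristic in the normal subgroup $\pi^{-1}(P_1'')$ and hence normal in $\pi^{-1}(P)$, so commutators land in $\tilde P_1''\cap Z=1$. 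Your approach is more elementary and homogeneous, exploiting only the exponent of $Z$ and divisibility; the paper's is more structural and shows explicitly how the pieces sit inside $\tilde{\cG}$. You also spell out the quasitorus assertion for $\pi^{-1}(P_0\times P_1'')$ (abelian and without nontrivial unipotents, which in characteristic zero forces it to be a quasitorus), a step the paper's proof leaves implicit --- that is a genuine plus.
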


\begin{proof}
Since $\pi^{-1}(P_0)$ is a quasitorus (being a subgroup of the maximal torus of $\tilde\cG$ that projects onto a maximal torus of $\cG$ containing $P_0$), we have $\pi^{-1}(P_0)=ZS$ where $S$ is a torus that projects onto $P_0$. Since the commutator is a continuous map $\pi^{-1}(P)\times\pi^{-1}(P)\to Z$, $S$ is connected and $Z$ is finite, we conclude that $S$ commutes with all elements of $\pi^{-1}(P)$ and hence $\hat{\beta}^P(P,P_0)=1$. 

Now consider $P_1''$. Since $|P_1''|$ and $|Z|$ are coprime, we have $\pi^{-1}(P_1'')=Z\times\tilde{P}_1''$ where $\tilde{P}_1''\simeq P_1''$. Since $\tilde{P}_1''$ is a characteristic subgroup of the normal subgroup $\pi^{-1}(P_1'')$ of $\pi^{-1}(P)$, it is itself normal in $\pi^{-1}(P)$. But this implies $[\pi^{-1}(P),\tilde{P}_1'']\subset\tilde{P}_1''\cap Z=1$ and hence $\hat{\beta}^P(P,P_1'')=1$.
\end{proof}

\begin{corollary}\label{cor:Brauer_E6E7}
Let $\Gamma$ be a $G$-grading on the simple Lie algebra $\cL$ of type $E_6$, respectively $E_7$. Then the support of the graded Brauer invariant of any $\lambda\in\Lambda^+$ is contained in the $3$-torsion, respectively $2$-torsion, of $G/H_\lambda$.
\end{corollary}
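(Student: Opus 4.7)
The plan is to apply Lemma~\ref{lm:1} to the quasitorus $P:=\alpha(K_\lambda)$ inside the adjoint group $\bar{\cG}$ and to push the resulting commutator vanishing through $\Psi_\lambda$ via Equation~\eqref{eq:new}. The key numerical input is that $Z(\tilde{\cG})$ is cyclic of order $3$ for type $E_6$ and of order $2$ for type $E_7$, so in either case the primes dividing $\lvert Z(\tilde{\cG})\rvert$ reduce to a single prime $p$ ($p=3$ or $p=2$).

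First I would reduce to the \emph{inner} case. For $E_7$ this is automatic since $\Aut(\mathrm{Dyn})$ is trivial. For $E_6$ the opening paragraph of Section~\ref{se:prelim} already observes that any $\lambda\in\Lambda^+$ with nontrivial stabilizer $S_\lambda$ lies in the root lattice $\Lambda^\mathrm{r}$; for such $\lambda$ the homomorphism $\Psi_\lambda$ is trivial and Equation~\eqref{eq:new} makes $\hat{\beta}_\lambda$ trivial, so its support is $\{0\}$ and the claim holds vacuously. Hence we may assume that $\alpha(K_\lambda)\subseteq\bar{\cG}$.

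Write $P=P_0\times P_1'\times P_1''$ as in Lemma~\ref{lm:1} with $\cG=\bar{\cG}$ and $Z=Z(\tilde{\cG})$. The lemma gives $\hat{\beta}^P(P,P_0\times P_1'')=1$. Using $u_\chi=\tilde{\rho}(\tilde{\alpha}_\chi)$ in the derivation of Equation~\eqref{eq:new}, the bicharacter $\hat{\beta}_\lambda$ factors as $\hat{\beta}_\lambda=\Psi_\lambda\circ\hat{\beta}^P\circ(\alpha\times\alpha)$, so $N:=\alpha^{-1}(P_0\times P_1'')\subseteq\rad\hat{\beta}_\lambda$. The quotient $K_\lambda/N$ embeds into $P_1'$, which is a finite $p$-group, and hence the further quotient $K_\lambda/\rad\hat{\beta}_\lambda$ is a finite $p$-group as well.

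To finish I would use the identification $K_\lambda\simeq\wh{G/H_\lambda}$ coming from $H_\lambda=K_\lambda^\perp$. Under Pontryagin duality the support $T\subseteq G/H_\lambda$ of the Brauer invariant equals $(\rad\hat{\beta}_\lambda)^\perp$ and so is canonically dual to $K_\lambda/\rad\hat{\beta}_\lambda$; being dual to a finite $p$-group, $T$ is itself a $p$-group, i.e.\ $T$ lies in the $p$-torsion of $G/H_\lambda$, giving $3$-torsion for $E_6$ and $2$-torsion for $E_7$. The argument is essentially bookkeeping; the only point requiring care is keeping track of the several Pontryagin dualities relating $\wh{G}$, $G/H_\lambda$, $K_\lambda$ and $T$, and checking that the factorisation of $\hat{\beta}_\lambda$ through $\hat{\beta}^P$ is valid once one is in the inner case.
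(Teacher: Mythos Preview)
Your proof is correct and follows essentially the same approach as the paper: both reduce to the inner case, apply Lemma~\ref{lm:1} to the relevant quasitorus in $\bar{\cG}$, and then use duality to conclude that the support is a $p$-group. The only cosmetic difference is that the paper first replaces $G$ by $G/H_\lambda$ and assumes $\alpha$ is an embedding (so the decomposition of Lemma~\ref{lm:1} comes directly from a splitting $G=G'\times G''$ with $G'$ the $p$-torsion), whereas you apply Lemma~\ref{lm:1} to $P=\alpha(K_\lambda)$ without these preliminary reductions and dualize at the end.
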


\begin{proof}
Without loss of generality, we may assume that $G$ is generated by the support of $\Gamma$, so the homomorphism  $\wh{G}\to\Aut(\cL), \chi\mapsto\alpha_\chi,$ is an embedding. Let $Q\subset\Aut(\cL)$ be the image of this homomorphism, so $Q\simeq\wh{G}$. As noted above, in the case of $E_6$, we may assume that $S_\lambda$ is trivial, so the graded Brauer invariant of $\lambda$ is associated to the coarsening of $\Gamma$ (a $(G/H_\lambda)$-grading) determined by the quasitorus $Q_\lambda=Q_\mathrm{in}$. Replacing $G$ with $G/H_\lambda$, we may assume that $Q\subset\inaut(\cL)$. 
Let $p=|Z(\tilde{\cG})|$, i.e., $p=3$ in the case of $E_6$ and $p=2$ in the case of $E_7$. Write $G=G'\times G''$ where $G'$ is the $p$-torsion of $G$. Accordingly, we have $Q=Q'\times Q''$ where $Q''=(G')^\perp\simeq\wh{G''}$ and $Q'=(G'')^\perp\simeq\wh{G'}$. By Lemma \ref{lm:1}, $\hat{\beta}^Q(Q,Q'')=1$ and hence $Q''$ is in the radical of $\hat{\beta}^Q_\lambda$, so the support of the Brauer invariant of $\lambda$ is contained in $(Q'')^\perp=G'$. 
\end{proof}

We will say that two elements $x,y\in\cG$ {\em strongly commute in $\cG$} if their preimages commute in $\tilde\cG$. Thus, if $P\subset\cG$ is a quasitorus and $x,y\in P$ then $\hat{\beta}^P(x,y)=1$ if and only if $x$ and $y$ strongly commute in $\cG$.

\begin{lemma}\label{lm:2}
Suppose $\cH\subset\cG$ are semisimple algebraic groups and $x,y\in\cH$. If $x$ and $y$ strongly commute in $\cH$ then they also strongly commute in $\cG$.
\end{lemma}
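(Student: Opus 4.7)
The strategy is to lift the inclusion $\cH \hookrightarrow \cG$ to a morphism between the simply connected covers $\tilde{f}\colon\tilde{\cH}\to\tilde{\cG}$, and then transport the commutation relation from $\tilde{\cH}$ to $\tilde{\cG}$ via $\tilde{f}$. Once such a lift exists, the rest is formal: preimages $\tilde x,\tilde y\in\tilde\cH$ of $x,y$ are assumed to commute, hence so do $\tilde f(\tilde x),\tilde f(\tilde y)\in\tilde\cG$, which are themselves preimages of $x,y$ under $\pi_\cG$. Recall that the commutator of two preimages in $\tilde\cG$ is independent of the choice of representatives, since $\ker\pi_\cG$ is central, so this really establishes strong commutation in $\cG$.

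The main point is therefore the construction of $\tilde f$. I would proceed as follows. Set $\cH':=\pi_\cG^{-1}(\cH)$, a closed subgroup of $\tilde\cG$, and let $(\cH')^0$ be its identity component. Since $\cH$ is connected, the restriction $\pi_\cG\colon(\cH')^0\to\cH$ is surjective, and its kernel is contained in $Z(\tilde\cG)\cap(\cH')^0$, hence is central in $(\cH')^0$. Thus $(\cH')^0\to\cH$ is a central isogeny of connected semisimple algebraic groups. By the universal property of the simply connected cover, the projection $\pi_\cH\colon\tilde\cH\to\cH$ factors uniquely through $(\cH')^0$, giving a morphism $\tilde f\colon\tilde\cH\to(\cH')^0\hookrightarrow\tilde\cG$ with $\pi_\cG\circ\tilde f=\pi_\cH$.

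With $\tilde f$ in hand, pick preimages $\tilde x,\tilde y\in\tilde\cH$ of $x,y\in\cH$. By hypothesis $[\tilde x,\tilde y]=1$ in $\tilde\cH$, so $[\tilde f(\tilde x),\tilde f(\tilde y)]=1$ in $\tilde\cG$. Because $\pi_\cG(\tilde f(\tilde x))=x$ and $\pi_\cG(\tilde f(\tilde y))=y$, these are legitimate preimages in $\tilde\cG$, and the commutator equality shows that $x$ and $y$ strongly commute in $\cG$.

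The only step that requires care is the existence of the lift $\tilde f$. I expect this to be routine given that $\tilde\cH$ is simply connected and $(\cH')^0\to\cH$ is a central isogeny of semisimple groups, which is exactly the setting in which the universal property applies; still, one has to verify the connectedness statement and the centrality of the kernel carefully, which is why I made the passage through $(\cH')^0$ explicit rather than invoking the lifting property directly.
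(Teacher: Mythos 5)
Your proof is correct and follows the same route as the paper: lift the inclusion $\cH\hookrightarrow\cG$ to a homomorphism $\tilde\cH\to\tilde\cG$ of simply connected covers and push the commuting preimages forward. The only difference is that you spell out the existence of the lift (via the identity component of $\pi_\cG^{-1}(\cH)$ and the universal property of the simply connected cover), whereas the paper simply invokes simple connectedness of $\tilde\cH$; your added detail is accurate and harmless.
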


\begin{proof}
Since $\tilde{\cH}$ is simply connected, there exists a homomorphism $f\colon\tilde{\cH}\to\tilde{\cG}$ that makes the following diagram commute:
\[
\xymatrix{
\tilde{\cH}\,\ar[r]^{f}\ar@{->>}[d] & \,\tilde{\cG}\ar@{->>}[d]\\
\cH\,\ar@{^{(}->}[r] & \,\cG
}
\]
Since preimages $\tilde{x}$ and $\tilde{y}$ of our elements in $\tilde{\cH}$ commute with one another, so do their preimages $f(\tilde{x})$ and $f(\tilde{y})$ in $\tilde{\cG}$.
\end{proof}

\begin{remark}
The converse of Lemma \ref{lm:2} does not hold. For example, for $\cL$ of type $E_6$ and $Q_1$ as in Remark \ref{re:E6gKO} below, $Q_1=\langle \sigma_1,\sigma_2\rangle$ where $\sigma_1$ and $\sigma_2$ strongly commute in $\Aut(\cL)$, but not in $\Aut(\cO)\simeq\Aut\bigl(M_3(\FF)\bigr)\simeq\SL_3(\FF)/\langle\omega I\rangle\leq \Aut(\cL)$. Also, for $\cL$ of type $E_7$,
the elements $[I_1\otimes X_2]$ and $[I_1\otimes Y_2]$ in \eqref{eq:kerbZ42Z23} strongly commute in $\Aut(\cL)$ but not in $\PGL(V_2)=\SL(V_2)/\langle -I\rangle\leq \SL(V)/\langle\bi I\rangle\leq \Aut(\cL)$.
\end{remark}

The following result is well known, but we sketch the proof for completeness.

\begin{lemma}\label{lm:3}
Let $x$ and $y$ be commuting semisimple elements in a semisimple algebraic group $\cG$. Then the following conditions are equivalent:
\begin{enumerate}
\item $x$ and $y$ strongly commute in $\cG$;
\item $y$ lies in the connected component of the centralizer $\Centr_\cG(x)$;
\item $x$ and $y$ are contained in a torus in $\cG$;
\item the fixed subalgebra $\Fix_{\Lie(\cG)}(x,y)$ contains a Cartan subalgebra of $\Lie(\cG)$.
\end{enumerate}
\end{lemma}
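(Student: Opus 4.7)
The plan is to prove the four conditions equivalent by establishing (3)$\Leftrightarrow$(1), (3)$\Leftrightarrow$(2), and (3)$\Leftrightarrow$(4), using throughout the standard fact that two commuting semisimple elements of a connected semisimple algebraic group lie in a common maximal torus.

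First, for (3)$\Leftrightarrow$(1): if $x,y$ lie in a torus $T\subset\cG$, I would argue that $\pi^{-1}(T)$ is abelian. Its identity component $S\bydef\pi^{-1}(T)^\circ$ surjects onto $T$ with finite kernel contained in $Z$, so $S$ is itself a torus; and $\pi^{-1}(T)=S\cdot Z$ with $Z\subset Z(\tilde\cG)$ central, which forces $\pi^{-1}(T)$ to be abelian. Hence any preimages of $x$ and $y$ commute. Conversely, if preimages $\tilde x,\tilde y\in\tilde\cG$ commute, then they are semisimple (since $\pi$ is a central isogeny), so they lie in a common maximal torus $\tilde T\subset\tilde\cG$, and $\pi(\tilde T)$ is a torus in $\cG$ containing $x$ and $y$.

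For (3)$\Leftrightarrow$(2): if $x,y$ lie in a torus $T$, then $T$ is a connected subgroup of $\Centr_\cG(x)$, so $T\subset\Centr_\cG(x)^\circ$ and $y\in\Centr_\cG(x)^\circ$. Conversely, since $x$ is semisimple, $\Centr_\cG(x)^\circ$ is a connected reductive group with $x$ in its center; because $y$ is semisimple, it lies in some maximal torus $T'$ of $\Centr_\cG(x)^\circ$, and $T'$ automatically contains the central element $x$. For (3)$\Leftrightarrow$(4): any torus containing $x,y$ is contained in a maximal torus $T_{\max}$, whose Lie algebra $\frh=\Lie(T_{\max})$ is a Cartan subalgebra fixed pointwise by $x$ and $y$ via $\Ad$. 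Conversely, if $\frh\subset\Fix_{\Lie(\cG)}(x,y)$ is a Cartan subalgebra, write $\frh=\Lie(T_{\max})$; then $x$ and $y$ belong to $\Centr_\cG(\frh)$, which equals $T_{\max}$ in the connected reductive group $\cG$ because $\frh$ is self-centralizing in $\Lie(\cG)$ and, in characteristic zero, any element of $\cG$ whose adjoint action is trivial on $\Lie(T_{\max})$ must centralize the whole torus $T_{\max}$.

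The only delicate point is the structural step in (3)$\Rightarrow$(1), namely the decomposition $\pi^{-1}(T)=S\cdot Z$ with $S$ a torus; the remaining implications follow immediately from standard structure theory of semisimple algebraic groups.
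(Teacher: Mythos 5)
The overall architecture is reasonable and most individual implications are correct, but your proof rests on a ``standard fact'' that is false as you state it, and this is not a cosmetic slip: the failure of that fact is precisely what the lemma is measuring. You claim that two commuting semisimple elements of a connected semisimple algebraic group lie in a common maximal torus. If that were true, condition (3) would follow immediately from the hypotheses and all four conditions would hold automatically for every pair of commuting semisimple elements --- contradicting, for instance, Theorem \ref{th:e7c4}(i) and the many nontoral abelian subgroups exhibited throughout the paper. A concrete counterexample: in $\PGL_2(\FF)$ the images of the $2\times 2$ Pauli matrices $X$ and $Y$ are semisimple and commute (since $XY=-YX$), yet they generate a copy of $\ZZ_2\times\ZZ_2$, which cannot embed in the one-dimensional maximal torus $\FF^\times$ of $\PGL_2(\FF)$.

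The fact you need is true only where you actually apply it, namely in the simply connected group $\tilde\cG$ for the implication (1)$\Rightarrow$(3), and there it is a theorem rather than a triviality: by Steinberg's connectedness theorem, centralizers of semisimple elements in a simply connected semisimple group are connected, so the semisimple element $\tilde y$ lies in a maximal torus of the connected reductive group $\Centr_{\tilde\cG}(\tilde x)$, and that torus also contains the central element $\tilde x$. This is exactly the paper's route: it proves (1)$\Rightarrow$(2) via Steinberg and then (2)$\Rightarrow$(3) by the same argument you give. So your proof is repairable by replacing the false general claim with an explicit appeal to Steinberg in $\tilde\cG$; as written, the key step is unjustified and the justification offered is wrong. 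The remaining implications --- your (3)$\Rightarrow$(1) via the decomposition $\pi^{-1}(T)=S\cdot Z$ (the paper instead observes that the preimage of a maximal torus of $\cG$ is a maximal torus of $\tilde\cG$, which is slightly quicker), (3)$\Leftrightarrow$(2), and (3)$\Leftrightarrow$(4) --- are correct and essentially match the paper.
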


\begin{proof}
(1)$\Rightarrow$(2) By a theorem of Steinberg (see e.g. \cite[Th. 3.5.6]{Carter}), if $G$ is a connected reductive algebraic group such that $[G,G]$ is simply connected then the centralizers of semisimple elements in $G$ are connected. In particular, this applies to simply connected semisimple groups. If $x$ and $y$ strongly commute in $\cG$ then $\tilde{y}\in\Centr_{\tilde{\cG}}(\tilde{x})$. Since this centralizer is connected, $\pi$ maps it to the connected component of $\Centr_{\cG}(x)$.

(2)$\Rightarrow$(3) Since $y$ is a semisimple element of the connected component of $\Centr_\cG(x)$, it is contained in a maximal torus of $\Centr_\cG(x)$, which also contains $x$, because $x$ is in the center of $\Centr_\cG(x)$.

(3)$\Rightarrow$(1) Let $\cT$ be a maximal torus of $\cG$ containing $x$ and $y$. Then $\tilde{\cT}=\pi^{-1}(\cT)$ is a maximal torus of $\tilde{\cG}$, hence $\tilde{x}$ and $\tilde{y}$ commute.

(3)$\Leftrightarrow$(4) Any Cartan subalgebra of $\Lie(\cG)$ has the form $\Lie(\cT)$ for some maximal torus $\cT$ of $\cG$. It remains to observe that $x$ and $y$ fix $\Lie(\cT)$ point-wise if and only if $\cT$ centralizes $x$ and $y$, and the latter condition is equivalent to $x,y\in\cT$ since $\cT$ is self-centralizing in $\cG$.
\end{proof}

\begin{remark}
The equivalence of the first three conditions is valid in arbitrary characteristic.
\end{remark}

Subgroups of $\cG$ that are contained in a torus are said to be \emph{toral} in $\cG$.

%----------------------------------------------------------------------------------------------------------------------
%SECTION 4: BRAUER INVARIANTS FOR E6
%----------------------------------------------------------------------------------------------------------------------

\section{Graded Brauer invariants for $E_6$}\label{se:E6}

Let $\cL$ be the simple Lie algebra of type $E_6$. As mentioned in the introduction, up to equivalence, $\cL$ has $14$ fine gradings \cite{DV_E6}, which are listed in \cite[Fig. 6.2]{EK_mon}.  There are $5$ fine gradings that are inner, i.e., the corresponding maximal quasitorus $Q$ is contained in $\inaut(\cL)$; their universal groups are $\ZZ^6$ (the Cartan grading), $\ZZ^2\times\ZZ_3^2$, $\ZZ^2\times\ZZ_2^3$, $\ZZ_3^4$ and $\ZZ_2^3\times\ZZ_3^2$. The remaining $9$ are  outer, i.e., $Q$ is not contained in $\inaut(\cL)$ and hence $Q_\mathrm{in}=Q\cap\inaut(\cL)$ has index $2$ in $Q$. It follows that in each of these outer cases the  universal group $G^u$ contains a {\em distinguished element} $h$ of order $2$ that is characterized by the property that $Q_\mathrm{in}=\langle h\rangle^\perp$ or, equivalently, the coarsening induced by the natural homomorphism $G^u\to G^u/\langle h\rangle$ is an inner grading. The universal groups of the outer fine gradings are $\ZZ^4\times\ZZ_2$, $\ZZ^2\times\ZZ_2^3$, $\ZZ\times\ZZ_2^5$, $\ZZ\times\ZZ_2^4$, $\ZZ_2\times\ZZ_3^3$, $\ZZ_2^7$, $\ZZ_2^6$, $\ZZ_4^3$ and $\ZZ_4\times\ZZ_2^4$. It so happens that all $14$ universal groups are distinct, with the exception of $\ZZ^2\times\ZZ_2^3$, but in this case one of the gradings is inner and the other outer, so we may refer to each fine grading using its universal group and, if necessary, indicating whether it is inner or outer.

\subsection{Outer fine gradings}

\begin{theorem}\label{th:E6_outer}
For each of the $9$ outer fine gradings of the simple Lie algebra of type $E_6$, the graded Brauer invariants of all $\lambda\in\Lambda^+$ are trivial.
\end{theorem}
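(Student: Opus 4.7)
The strategy I propose is to reduce all nine cases to a single lifting argument in $\tilde\cG$, valid uniformly for every outer fine grading.

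\emph{Reduction to $Q_{\mathrm{in}}$.} Since $\Aut(\mathrm{Dyn})$ for $E_6$ is $\ZZ_2$ and acts by inversion on $\Lambda/\Lambda^\mathrm{r}\simeq\ZZ_3$, the only $\lambda\in\Lambda^+$ with nontrivial stabilizer $S_\lambda$ are those in $\Lambda^\mathrm{r}$; for such $\lambda$ the homomorphism $\Psi_\lambda$ is trivial, so the graded Brauer invariant vanishes automatically, as noted at the end of Section~\ref{se:graded_modules}. I may therefore assume $\lambda\notin\Lambda^\mathrm{r}$, in which case $S_\lambda=1$ and $Q_\lambda=Q_{\mathrm{in}}$. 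By~\eqref{eq:bichar_for_Q}, it suffices to show that the bihomomorphism $\hat\beta^{Q_{\mathrm{in}}}\colon Q_{\mathrm{in}}\times Q_{\mathrm{in}}\to Z:=Z(\tilde\cG)\simeq\ZZ_3$ of~\eqref{eq:def_beta_quasitorus} is trivial.

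\emph{Uniform lifting via an outer element of $Q$.} Because the grading is outer, $Q_{\mathrm{in}}=\langle h\rangle^\perp$ has index $2$ in $Q$, so I may pick an element $\sigma\in Q\setminus Q_{\mathrm{in}}$. Regarded as an automorphism of $\tilde\cG$ via Lie correspondence, $\sigma$ projects to the nontrivial element of $\Aut(\mathrm{Dyn})$ and hence acts on $Z$ by inversion. For each $q\in Q_{\mathrm{in}}$, abelianness of $Q$ gives $\sigma q\sigma^{-1}=q$ in $\bar\cG$, so $\sigma$ permutes the coset $\pi^{-1}(q)$. Fixing an arbitrary lift $\wt q$, the computation $\sigma(z\wt q)=z^{-1}\sigma(\wt q)$ for $z\in Z$ shows that $\sigma$ acts on the $Z$-torsor $\pi^{-1}(q)$ as an affine transformation of the form $z\mapsto z^{-1}z_0$ on $\ZZ_3$, which has a unique fixed point because $z\mapsto z^2$ is an automorphism of $\ZZ_3$. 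Let $\wt q_\sigma\in\pi^{-1}(q)$ denote the unique $\sigma$-fixed lift. Uniqueness turns $q\mapsto\wt q_\sigma$ into a group homomorphism $Q_{\mathrm{in}}\hookrightarrow(\tilde\cG)^\sigma$: the product of two $\sigma$-fixed lifts is $\sigma$-fixed and lifts the product, so it equals the designated lift. Let $\wt Q_{\mathrm{in}}$ be its image; it is abelian and meets $Z$ trivially because the only $\sigma$-fixed element of $Z$ is the identity. A cardinality count yields $\pi^{-1}(Q_{\mathrm{in}})=Z\cdot\wt Q_{\mathrm{in}}=Z\times\wt Q_{\mathrm{in}}$, which is abelian since $Z$ is central. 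Consequently the commutator map $\hat\beta^{Q_{\mathrm{in}}}$ is identically trivial, and every graded Brauer invariant vanishes.

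\emph{On the main obstacle.} The plan is uniform in the nine outer fine gradings and uses only two standard inputs: outer automorphisms of $E_6$ invert $Z(\tilde\cG)$ (already invoked at the start of Section~\ref{se:prelim}), and the defining quasitorus of an outer grading contains an outer element. An alternative would be to dispose of the eight gradings whose universal group has no $3$-torsion by Corollary~\ref{cor:Brauer_E6E7} (noting that $|H_\lambda|\le 2$ prevents any $3$-torsion from appearing in $G^u/H_\lambda$) and then handle only $G^u=\ZZ_2\times\ZZ_3^3$ by the lifting argument; but the uniform treatment seems more conceptual and more directly exhibits the role of the distinguished outer element. I expect no substantive technical obstacle.
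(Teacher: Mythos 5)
Your proof is correct, but it takes a genuinely different route from the paper's. The paper splits the nine outer fine gradings into two groups: for the eight whose universal group $G^u$ has no $3$-torsion it applies Corollary \ref{cor:Brauer_E6E7} to $G^u/\langle h\rangle$, and for the remaining case $G^u=\ZZ_2\times\ZZ_3^3$ it exhibits $Q_\mathrm{in}$ inside the image of the simply connected group $\Aut(\cA)$ of type $F_4$ ($\cA$ the Albert algebra) and invokes Lemma \ref{lm:2}. You instead run a single descent-type argument in all nine cases, resting on the one fact that an outer element $\sigma\in Q\setminus Q_\mathrm{in}$ inverts $Z(\tilde\cG)\simeq\ZZ_3$: since $|Z(\tilde\cG)|$ is odd, each fiber $\pi^{-1}(q)$, $q\in Q_\mathrm{in}$, has a unique $\sigma$-fixed point, and uniqueness forces these canonical lifts to commute (both $\wt{q_1}_\sigma\wt{q_2}_\sigma$ and $\wt{q_2}_\sigma\wt{q_1}_\sigma$ are $\sigma$-fixed lifts of $q_1q_2$), so $\hat\beta^{Q_\mathrm{in}}$ vanishes. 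All the inputs check out: the identification $\Aut(\cL)\simeq\Aut(\tilde\cG)$ in characteristic $0$, the compatibility $\pi(\sigma(\wt q))=\sigma\pi(\wt q)\sigma^{-1}$, and the inversion action of the diagram flip on $\Lambda/\Lambda^\mathrm{r}$ (already recorded at the start of Section \ref{se:prelim}). Your approach is more conceptual and even yields the stronger structural conclusion $\pi^{-1}(Q_\mathrm{in})=Z(\tilde\cG)\times\wt Q_\mathrm{in}$; the paper's approach costs little extra because Corollary \ref{cor:Brauer_E6E7} and Lemma \ref{lm:2} are needed elsewhere anyway. One cosmetic point: the phrase ``cardinality count'' is inapt since $Q_\mathrm{in}$ is infinite for several of these gradings (e.g.\ $\ZZ^4\times\ZZ_2$); the correct justification is the fiberwise identity $\pi^{-1}(q)=Z\,\wt q_\sigma$, which you have already established, so nothing is actually missing.
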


\begin{proof}
As already pointed out in Section \ref{se:prelim}, if $\lambda\in\Lambda^\mathrm{r}$ then the Brauer invariant is trivial, and otherwise $Q_\lambda=Q_\mathrm{in}$ and hence $H_\lambda=\langle h\rangle$. Looking at the $9$ universal groups, we see that, in $8$ cases, the group  $G^u/\langle h\rangle$ has no $3$-torsion, so the Brauer invariants are trivial by Corollary \ref{cor:Brauer_E6E7}. 

In the remaining case $G^u=\ZZ_2\times\ZZ_3^3$, the grading can be constructed as follows (see, for instance, \cite[\S 6.2]{EK_mon}). We realize $\cL$ as the Lie subalgebra of $\frgl(\cA)$, where $\cA$ is the so-called Albert algebra (the simple exceptional Jordan algebra of dimension $27$), given by $\cL=\Der(\cA)\oplus L_{\cA_0}$ (a grading by $\ZZ_2$), where $\cA_0$ is the subspace of trace zero elements in $\cA$, and $L_{\cA_0}=\espan{L_x:x\in\cA_0}$, with $L_x$ being the operator of multiplication by $x$ in $\cA$. The $\ZZ_3^3$-grading on $\cA$ (see e.g. \cite[\S 5.2]{EK_mon}) then induces the desired $\ZZ_2\times\ZZ_3^3$-grading on $\cL$.
There is only one element $h$ of order $2$ in $G^u$, and the resulting grading by $G^u/\langle h\rangle$ is obtained by using the $\ZZ_3^3$-grading on $\cA$ and forgetting about the $\ZZ_2$-grading. Recall from Equations \eqref{eq:bichar_for_Q} and \eqref{eq:def_beta_quasitorus} that $\hat{\beta}^Q_\lambda(x,y)=\Psi_\lambda(\hat{\beta}^{Q_\lambda}(x,y))$, so it suffices to show that $\hat{\beta}^{Q_\mathrm{in}}$ is trivial, i.e., any elements $x,y\in Q_\mathrm{in}$ strongly commute in $\Aut(\cL)$. There is a natural embedding of $\Aut(\cA)$, which is a simple algebraic group of type $F_4$, into $\Aut(\cL)$. Since the quasitorus $Q_\mathrm{in}$ is contained in the image of $\Aut(\cA)$, and $\Aut(\cA)$ is simply connected, Lemma \ref{lm:2} completes the proof. 
\end{proof}

\begin{corollary}
Let $\cL$ be the simple Lie algebra of type $E_6$ and let $\sigma$ be the nontrivial automorphism of its Dynkin diagram. Suppose $\cL$ is equipped with a $G$-grading $\Gamma$ that is induced from one of the outer fine gradings by a homomorphism $\nu\colon G^u\to G$ where $G^u$ is the universal group of the fine grading. Let $h\in G^u$ be the distinguished element.
\begin{enumerate}
\item[(a)] If $\nu(h)\ne e$ (i.e., if $\Gamma$ is outer) then a finite-dimensional $\cL$-module $V$ admits a compatible $G$-grading if and only if, for any $\lambda\in\Lambda^+$, the simple $\cL$-modules $V(\lambda)$ and $V(\sigma(\lambda))$ occur in $V$ with the same multiplicity.
\item[(b)] If $\nu(h)=e$ (i.e., if $\Gamma$ is inner) then every finite-dimensional $\cL$-module admits a compatible $G$-grading. 
\end{enumerate}
\end{corollary}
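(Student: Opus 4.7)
The plan is to deduce both statements directly from Theorem \ref{th:E6_outer} using the description of finite-dimensional $G$-graded simple $\cL$-modules recalled in Section \ref{se:graded_modules}. A finite-dimensional $\cL$-module $V$ admits a compatible $G$-grading if and only if it decomposes as a direct sum of graded simples, and by \cite[Theorem~8]{EK_Israel} these graded simples are, up to isomorphism and shift, the modules $W(\cO)$ attached to the $\wh{G}$-orbits $\cO\subset\Lambda^+$. Since $\Gamma$ is a coarsening of one of the outer fine gradings, formula \eqref{eq:bichar_for_Q} reduces the graded Brauer invariants of $\Gamma$ to those of the fine grading, which Theorem \ref{th:E6_outer} asserts to be trivial. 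Consequently every graded Schur index equals $1$, and each $W(\cO)$ decomposes as an ungraded $\cL$-module into $\bigoplus_{\mu\in\cO}V(\mu)$ with every simple constituent appearing exactly once.

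Next I would identify the $\wh{G}$-orbits on $\Lambda^+$ in the two cases. The grading $\Gamma$ corresponds to the dual homomorphism $\nu^*\colon\wh{G}\to\wh{G^u}=Q$, whose image lies in $Q_\mathrm{in}=\langle h\rangle^\perp$ precisely when $\nu(h)=e$. In case (b), this image is contained in $\inaut(\cL)$, so $\wh{G}$ acts trivially on $\Lambda^+$, every orbit is a singleton, $W(\{\lambda\})\cong V(\lambda)$, and hence every finite-dimensional $\cL$-module admits a compatible $G$-grading. In case (a), the image of $\wh{G}$ in $\Aut(\mathrm{Dyn})\simeq\ZZ_2$ is nontrivial, so $\wh{G}$ acts on $\Lambda^+$ through $\sigma$; the orbits are $\{\lambda\}$, giving the graded simple module $V(\lambda)$, when $\sigma(\lambda)=\lambda$, and $\{\lambda,\sigma(\lambda)\}$, giving $V(\lambda)\oplus V(\sigma(\lambda))$, otherwise. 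A finite-dimensional module $\bigoplus_\lambda V(\lambda)^{m_\lambda}$ is then a direct sum of such graded simples if and only if $m_\lambda=m_{\sigma(\lambda)}$ for every $\lambda\in\Lambda^+$, which is exactly the criterion in (a).

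No genuinely difficult step is expected: the corollary is a bookkeeping exercise in the Clifford-theoretic framework of Section \ref{se:graded_modules}, once Theorem \ref{th:E6_outer} is in hand. The only mild subtlety is the duality translation $\nu(h)=e\Longleftrightarrow\nu^*(\wh{G})\subset Q_\mathrm{in}$, which follows because characters separate points of finitely generated abelian groups over an algebraically closed field of characteristic zero.
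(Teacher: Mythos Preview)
Your proof is correct and follows essentially the same approach as the paper: invoke Theorem \ref{th:E6_outer} (via the reduction \eqref{eq:bichar_for_Q}) to conclude that all graded Brauer invariants are trivial, then read off the graded simple modules from the Clifford-theoretic description in Section \ref{se:graded_modules} and match the $\wh{G}$-orbits on $\Lambda^+$ in the two cases. Your write-up is more explicit about the orbit analysis and the duality between $\nu(h)=e$ and $\nu^*(\wh{G})\subset Q_{\mathrm{in}}$, but the argument is the same.
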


\begin{proof}
Recall that the graded Brauer invariant of $\lambda$ with respect to $\Gamma$ is determined by $\hat{\beta}_\lambda(\chi_1,\chi_2)=\hat{\beta}^Q_\lambda(\alpha_{\chi_1},\alpha_{\chi_2})$, for $\chi_1,\chi_2\in K_\lambda$, so Theorem \ref{th:E6_outer} implies that $\hat{\beta}_\lambda$ is trivial, which means that $V(\lambda)$ admits a compatible $(G/H_\lambda)$-grading. Hence, there are two kinds of $G$-graded simple $\cL$-modules: $V(\lambda)$ for $\lambda$ stabilized by $\wh{G}$ and $\Ind_{K_\lambda}^{\wh{G}} V(\lambda)$ for all other $\lambda$. The result follows.
\end{proof}

\subsection{Inner fine gradings} 
As already mentioned, the universal groups are $\ZZ^6$, $\ZZ^2\times\ZZ_2^3$, $\ZZ^2\times\ZZ_3^2$, $\ZZ_2^3\times \ZZ_3^2$ and $\ZZ_3^4$, which we divide into three cases.

\medskip

$\boxed{\ZZ^6\text{ and }\ZZ^2\times\ZZ_2^3}$\quad
Since there is no $3$-torsion, the Brauer invariants are trivial.

\medskip

$\boxed{\ZZ^2\times\ZZ_3^2\text{ and }\ZZ_2^3\times\ZZ_3^2}$\quad
To construct these gradings, we think of $\cL$ in terms of Tits construction $\cT(\cC,\cJ)$, where $\cC$ is the Cayley algebra and $\cJ$ is the Jordan algebra $M_3(\FF)^{(+)}$, i.e., the algebra of $3\times 3$ matrices with the symmetrized product: $x\cdot y=\frac{1}{2}(xy+yx)$. Recall that $\cT(\cC,\cJ)$ is defined as the vector space direct sum
\[
\cT(\cC,\cJ)=\Der(\cC)\oplus(\cC_0\otimes\cJ_0)\oplus\Der(\cJ),
\]
where the subscript $0$ refers to the subspace of trace zero elements. Here $\Der(\cC)\oplus\Der(\cJ)$ is a subalgebra of $\cT(\cC,\cJ)$, the middle term $\cC_0\otimes\cJ_0$ is its natural module, and only the definition of the bracket of two elements in this middle term requires some care (see e.g. \cite[\S 6.2]{EK_mon}). In particular, the group $\Aut(\cC)\times\Aut(\cJ)$ is naturally embedded in $\Aut(\cL)$. It is well known that $\Aut(\cC)$ is the simple algebraic group of type $G_2$ and $\Aut(\cJ)$ contains the simple algebraic group $\Aut(M_3(\FF))\simeq\PSL_3(\FF)$ of type $A_2$ as a subgroup of index $2$ (the connected component).

Recall the $\ZZ_3^2$-grading on the matrix algebra $M_3(\FF)$ associated to the generalized Pauli matrices $X,Y\in\SL_3(\FF)$ (see Equation \eqref{Pauli}):  $X^3=Y^3=I$ and $XY=\omega YX$, where $\omega\in\FF$ is a primitive cubic root of unity, and $M_3(\FF)_{(\bar{\imath},\bar{\jmath})}=\FF X^iY^j$, for any $0\leq i,j<3$. Then the corresponding quasitorus $Q_2=\langle [X],[Y]\rangle$ in $\Aut(M_3(\FF))$, where brackets denote the class of an element of $\SL_3(\FF)$ modulo the center, is maximal, and it remains maximal in $\Aut(\cJ)$ \cite[\S 5.6]{EK_mon}.

The two gradings in question are obtained by combining this $\ZZ_3^2$-grading on $\cJ$ with each of the two possible fine gradings (up to equivalence) on $\cC$, whose universal groups are $\ZZ^2$ and $\ZZ_2^3$ (see e.g. \cite[\S 4.2]{EK_mon}). Thus, if $Q_1$ is the maximal quasitorus of $\Aut(\cC)$ corresponding to one of these two fine gradings on $\cC$ then $Q=Q_1\times Q_2\leq \Aut(\cC)\times\Aut(\cJ)\leq\Aut(\cL)$ is the maximal quasitorus of $\Aut(\cL)$ giving us a desired fine grading on $\cL$.  The rank of $\Fix_{\cL}(Q_2)=\Der(\cC)$ is $2$, so $[X]$ and $[Y]$ do not strongly commute in $\Aut(\cL)$ by Lemma \ref{lm:3}. Therefore, the restriction of $\hat{\beta}^Q$ to $Q_2$ is nontrivial, so Lemma \ref{lm:1} implies that the radical of $\hat{\beta}^Q$ is equal to $Q_1$. More precisely, the homomorphism $f$ that completes the commutative diagram
\[
\xymatrix{
\SL_3(\FF)\,\ar[r]^{f}\ar@{->>}[d] & \,\tilde{\cG}\ar@{->>}[d]\\
\Aut(M_3(\FF))\,\ar@{^{(}->}[r] & \,\inaut(\cL)
}
\]
where $\tilde{\cG}$ is the simply connected group of type $E_6$, is an embedding, and we have $\hat{\beta}^Q([X],[Y])=f(\omega I)$.

\medskip

$\boxed{\ZZ_3^4}$\quad The maximal quasitorus of $\Aut(\cL)$ corresponding to this grading is fully described in \cite{DV_E6}. It is generated by four order $3$ automorphisms:
\[
Q=\langle F_1,F_2,F_3,F_4\rangle,
\]
with $\langle F_1,F_2\rangle$ nontoral \cite[Lemma 9]{DV_E6}, $\langle F_2,F_3,F_4\rangle$ toral \cite[Remark 4]{DV_E6} (hence isotropic for $\hat\beta^Q$ by Lemma \ref{lm:3}), and $\langle F_1,F_3,F_4\rangle$ nontoral but with any two elements generating a toral subgroup \cite[Remark 5]{DV_E6} (hence also isotropic for $\hat\beta^Q$). This proves at once that the radical of $\hat\beta^Q$ is $\langle F_3,F_4\rangle$.

\begin{remark}\label{re:E6gKO}
This fine $\ZZ_3^4$-grading on the simple Lie algebra $\cL$ of type $E_6$ can also be described in terms of the symmetric construction of Freudenthal's Magic Square $\frg(\cC_1,\cC_2)$ where $\cC_1$ and $\cC_2$ are symmetric composition algebras. (The reader is referred to \cite[\S 6.2]{EK_mon} for background, details, and references.) Let $\bar\cK$ be the para-Hurwitz algebra associated to the quadratic algebra $\cK=\FF\times\FF$ and let $\cO$ be the Okubo algebra over $\FF$. Then $\cL$ is isomorphic to the Lie algebra
\[
\frg(\cO,\bar\cK)\bydef \bigl(\tri(\cO)\oplus\tri(\bar\cK)\bigr)\oplus\Bigl(\bigoplus_{i=1}^3\iota_i(\cO\otimes\bar\cK)\Bigr),
\]
the direct sum of the \emph{triality Lie algebras} of $\cO$ and $\bar\cK$, respectively, and of three copies of $\cO\otimes\bar\cK$. There is a natural order $3$ automorphism $\Theta$ of $\frg(\cO,\bar\cK)$ (see \cite[Equation (6.23)]{EK_mon}) that permutes cyclically these three copies of $\cO\otimes\bar\cK$. Also, there is a natural embedding
\[
\Aut(\cO)\times\Aut(\bar\cK)\hookrightarrow \Aut\bigl(\frg(\cO,\bar\cK)\bigr).
\]
The Okubo algebra has a fine $\ZZ_3^2$-grading (see e.g. \cite[\S 4.6]{EK_mon}). Let $Q_1\leq\Aut(\cO)$ be the corresponding maximal quasitorus. The group $\Aut(\bar\cK)$ is the symmetric group of order $3$. If $\vartheta$ is one of its order $3$ elements, then $Q_2\bydef\langle \Theta,\vartheta\rangle$ is a nontoral subgroup of $\Aut\bigl(\frg(\cO,\bar\cK)\bigr)$ isomorphic to $\ZZ_3^2$, whose centralizer is 
$\Centr(Q_2)=\Aut(\cO)\times Q_2$ (\cite[Proposition 6.38 and Corollary 6.39]{EK_mon}). Hence $Q=Q_1\times Q_2$ is a maximal quasitorus of $\Aut\bigl(\frg(\cO,\bar\cK)\bigr)$, and it gives rise to the desired fine $\ZZ_3^4$-grading on $\frg(\cO,\bar\cK)$.
\end{remark}

Using this model, we can determine the radical of $\hat{\beta}^Q$ independently of \cite{DV_E6}. Indeed, the subgroup $Q_1\times\langle\vartheta\rangle$ is toral, because its fixed subalgebra is the direct sum of a Cartan subalgebra of $\tri(\cO)\simeq\frso(\cO)$ and the two-dimensional abelian subalgebra $\tri(\bar\cK)$, and this sum is a Cartan subalgebra of $\frg(\cO,\bar\cK)$. Therefore, $Q_1\times\langle\vartheta\rangle$ is isotropic for $\hat\beta^Q$. Also, for $\Theta$, we have  
\[
\Fix_{\frg(\cO,\bar\cK)}(\Theta)=\Der(\cO)\oplus\espan{\sum_{i=1}^3\iota_i(x\otimes y):  x\in\cO,\,y\in\bar\cK},
\]
and, for any $1\ne \sigma\in Q_1$, $\Fix_{\cO}(\sigma)$ is a Cartan subalgebra of the Lie algebra $\cO^{(-)}\simeq \frsl_3(\FF)$, hence
\[
\Fix_{\frg(\cO,\bar\cK)}(\Theta,\sigma)=\ad\bigl(\Fix_{\cO}(\sigma)\bigr)\oplus\espan{\sum_{i=1}^3 \iota_i(x\otimes y): x\in \Fix_{\cO}(\sigma),\,y\in\bar\cK}
\]
is a Cartan subalgebra of $\frg(\cO,\bar\cK)$. Therefore, for any $\sigma\in Q_1$, the subgroup $\langle\sigma,\Theta\rangle$ is toral and hence isotropic for $\hat\beta^Q$. We conclude that the radical of $\hat\beta^Q$ is $Q_1$.

\medskip

Now we are going to summarize our results concerning inner fine gradings for $E_6$. We will fix the following enumeration of simple roots $\{\alpha_1,\ldots,\alpha_6\}$ and the corresponding fundamental weights $\{\fw_1,\ldots,\fw_6\}$:

%\margen{This numbering of the roots is not the standard one in Bourbaki. The same happens for $E_7$. Should we keep it?}
\[
\xymatrix{
\bullet\ar@{-}[]+0;[r]+0^<{\alpha_1}  & \bullet\ar@{-}[]+0;[r]+0^<{\alpha_2} & \bullet\ar@{-}[]+0;[r]+0^<{\alpha_3}\ar@{-}[]+0;[d]+0^>{\alpha_6}  & \bullet\ar@{-}[]+0;[r]+0^<{\alpha_4} & \bullet\ar@{}[]+0;[r]^<{\alpha_5} & \\
& & \bullet & &
}
\]

\begin{theorem}\label{th:E6_inner}
Let $\cL$ be the simple Lie algebra of type $E_6$. For the inner fine gradings on $\cL$ with universal groups $\ZZ^6$ and $\ZZ^2\times\ZZ_2^3$, the graded Brauer invariants of all $\lambda\in\Lambda^+$ are trivial. For the remaining inner fine gradings, with universal groups $\ZZ^2\times\ZZ_3^2$, $\ZZ_2^3\times\ZZ_3^2$ and $\ZZ_3^4$, the graded Brauer invariant of $\lambda=\sum_{i=1}^6 m_i\fw_i\in\Lambda^+$ is trivial if $m_1-m_2+m_4-m_5\equiv 0\pmod{3}$ and has support $T$ otherwise, where $T\simeq\ZZ_3^2$ is the 3-torsion subgroup in the case of $\ZZ^2\times\ZZ_3^2$ or $\ZZ_2^3\times\ZZ_3^2$, and the factor $\ZZ_3^2$ associated to the automorphisms $\Theta$ and $\vartheta$ of $\cL$ as in Remark \ref{re:E6gKO} (alternatively, the automorphisms $F_1$ and $F_2$ in the model given in \cite{DV_E6}) in the case of $\ZZ_3^4$.
\end{theorem}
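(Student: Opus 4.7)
First I would dispatch the two easy universal groups $\ZZ^6$ and $\ZZ^2\times\ZZ_2^3$: neither has $3$-torsion, so Corollary \ref{cor:Brauer_E6E7} forces the graded Brauer invariant of every $\lambda\in\Lambda^+$ to be trivial, and there is nothing else to do.

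For the remaining three universal groups $\ZZ^2\times\ZZ_3^2$, $\ZZ_2^3\times\ZZ_3^2$, and $\ZZ_3^4$, the bulk of the work has already been carried out in the discussion preceding the theorem: in each case the radical of the alternating bihomomorphism $\hat{\beta}^Q\colon Q\times Q\to Z(\tilde{\cG})$ has been shown to equal the ``first factor'' $Q_1$. My plan is therefore to evaluate $\hat{\beta}^Q$ on a pair of generators of the complementary factor. Concretely, in the first two cases this is $\hat{\beta}^Q([X],[Y])=f(\omega I)$, where $f\colon\SL_3(\FF)\hookrightarrow\tilde{\cG}$ is the embedding furnished by the lift of $\Aut(M_3(\FF))\hookrightarrow\inaut(\cL)$; in the $\ZZ_3^4$ case it is $\hat{\beta}^Q(\Theta,\vartheta)$ (or equivalently $\hat{\beta}^Q(F_1,F_2)$). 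In each case I would invoke the nontorality of the relevant pair (via Lemma \ref{lm:3} together with the fixed-subalgebra computations already performed, or taken from \cite{DV_E6}) to conclude that this value is a nontrivial element of $Z(\tilde{\cG})\simeq\ZZ_3$, and hence a generator.

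With this in hand, since $\hat{\beta}^Q_\lambda=\Psi_\lambda\circ\hat{\beta}^Q$, the restriction of $\hat{\beta}^Q_\lambda$ to the complementary factor is trivial precisely when $\Psi_\lambda$ vanishes on $Z(\tilde{\cG})$, i.e.\ when the class of $\lambda$ in $\Lambda/\Lambda^\mathrm{r}\simeq\ZZ_3$ is zero. The next step is to unfold this condition explicitly in the paper's enumeration of simple roots. Using the inverse Cartan matrix of $E_6$ (or, equivalently, matching with the Bourbaki labelling), I would compute that the classes of $\fw_1,\ld,\fw_6$ in $\Lambda/\Lambda^\mathrm{r}$ are $1,-1,0,1,-1,0$ respectively; a useful cross-check is that the nontrivial diagram automorphism of $E_6$ swaps $\fw_1\leftrightarrow\fw_5$ and $\fw_2\leftrightarrow\fw_4$ while acting as $-1$ on $\Lambda/\Lambda^\mathrm{r}$ and fixing $\fw_3,\fw_6$, forcing the latter two into $\Lambda^\mathrm{r}$. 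This yields that the class of $\lambda=\sum m_i\fw_i$ equals $m_1-m_2+m_4-m_5\pmod{3}$, giving the stated triviality criterion. When the criterion fails, the support of $\hat{\beta}^Q_\lambda$ is the subgroup $T$ of $G^u$ dual (under $\widehat{G^u}\simeq Q$) to the pair chosen: the $3$-torsion $\ZZ_3^2$ in the two Tits-construction cases, and the $\ZZ_3^2$ factor from Remark \ref{re:E6gKO} in the $\ZZ_3^4$ case.

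The hard part is establishing that $f(\omega I)$, respectively $\hat{\beta}^Q(\Theta,\vartheta)$, really is a generator of $Z(\tilde{\cG})$: this rests on the nontorality arguments via Lemma \ref{lm:3} and the fact that the map $\SL_3(\FF)\to\tilde{\cG}$ lifting $\Aut(M_3(\FF))\hookrightarrow\inaut(\cL)$ is injective. The remaining identification of the support $T$ inside $G^u$ amounts to bookkeeping with the duality $\widehat{G^u}\simeq Q$, and the weight-class calculation is a routine $E_6$ computation.
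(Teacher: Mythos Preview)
Your proposal is correct and follows essentially the same approach as the paper's proof: dispatch the torsion-free cases via Corollary~\ref{cor:Brauer_E6E7}, use the precomputed radical of $\hat{\beta}^Q$, compose with $\Psi_\lambda$, and compute the classes of the fundamental weights modulo $\Lambda^\mathrm{r}$. The only cosmetic difference is that the paper argues slightly more economically by invoking the injectivity of $\Psi_{\fw_1}$ (so that $\rad\hat{\beta}_1=\rad\hat{\beta}^Q=Q_1$ immediately), whereas you propose to re-verify nontriviality on the complementary factor by evaluating $\hat{\beta}^Q$ on explicit generators; but this nontriviality is already contained in the statement that $\rad\hat{\beta}^Q=Q_1$, so your ``hard part'' is in fact already done in the preceding discussion.
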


\begin{proof}
It suffices to consider the cases with nontrivial $3$-torsion. Recall from Equations \eqref{eq:bichar_for_Q} and \eqref{eq:def_beta_quasitorus} that $\hat{\beta}^Q_\lambda(x,y)=\Psi_\lambda(\hat{\beta}^Q(x,y))$ for all $x,y\in Q=Q_\lambda$ and $\lambda\in\Lambda^+$, and we have already computed the radical of $\hat{\beta}^Q$. Now, $\Psi_\lambda$ is a character of the center $Z$ of the simply connected group of type $E_6$, i.e., a homomorphism of algebraic groups $Z\to\FF^\times$. The mapping $\Lambda^+\to\wh{Z},\,\lambda\mapsto\Psi_\lambda$, is multiplicative and trivial for $\lambda\in\Lambda^\mathrm{r}$ 
(in fact, it induces an isomorphism $\Lambda/\Lambda^+\stackrel{\sim}{\longrightarrow}\wh{Z}$). 
For type $E_6$, the coset of $\fw_1$ generates $\Lambda/\Lambda^\mathrm{r}\simeq\ZZ_3$, and we have $\fw_1\equiv-\fw_2\equiv\fw_4\equiv-\fw_5$ and $\fw_3\equiv\fw_6\equiv 0$ modulo $\Lambda^\mathrm{r}$. Therefore, $\Psi_\lambda=\Psi_{\fw_1}^{m_1-m_2+m_4-m_5}$ and hence $\hat{\beta}^Q_\lambda=\hat{\beta}_1^{m_1-m_2+m_4-m_5}$ where $\hat{\beta}_1$ is the graded Brauer invariant of $\fw_1$, which is the highest weight of the nontrivial simple $\cL$-module of lowest dimension (namely, the Albert algebra of dimension $27$). Since $\Psi_{\fw_1}$ is injective, the radical of $\hat{\beta}_1$ is the same as the radical of $\hat{\beta}^Q$, namely, $Q_1$. By definition, $T=Q_1^\perp$. The result follows.
\end{proof}

\begin{corollary}
Let $\cL$ be the simple Lie algebra of type $E_6$, equipped with a $G$-grading $\Gamma$ that is induced from one of the inner fine gradings by a homomorphism $\nu\colon G^u\to G$ where $G^u$ is the universal group of the fine grading. If $G^u$ has nontrivial $3$-torsion, let $T\simeq\ZZ_3^2$ be the distinguished subgroup of $G^u$ indicated in Theorem \ref{th:E6_inner}.
\begin{enumerate}
\item[(a)] If $G^u$ has trivial $3$-torsion or $\nu$ is not injective on $T$ then every finite-dimensional $\cL$-module admits a compatible $G$-grading. 
\item[(b)] If $\nu$ is injective on $T$ then a finite-dimensional $\cL$-module $V$ admits a compatible $G$-grading if and only if, for any $\lambda=\sum_{i=1}^6 m_i\fw_i\in\Lambda^+$ satisfying $m_1-m_2+m_4-m_5\not\equiv 0\pmod{3}$, the multiplicity of the simple $\cL$-module $V(\lambda)$ in $V$ is divisible by $3$.
\end{enumerate}
\end{corollary}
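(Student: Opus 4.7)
My plan is to derive the corollary from Theorem~\ref{th:E6_inner} together with the description of $G$-graded simple $\cL$-modules recalled in Section~\ref{se:graded_modules}. First I would reduce the question of admitting a compatible grading to a divisibility condition on multiplicities: since $\Gamma$ is inner, the image of $\chi\mapsto\alpha_\chi$ lies in $\inaut(\cL)$, so $\wh{G}$ fixes every element of $\Lambda^+$; consequently $K_\lambda=\wh{G}$, $H_\lambda=\{e\}$, and each $\wh{G}$-orbit in $\Lambda^+$ is a singleton $\{\lambda\}$. From the definition of $W(\cO)$ at the end of Section~\ref{se:graded_modules}, it follows that the finite-dimensional $G$-graded simple $\cL$-modules are, up to isomorphism and shift, precisely the modules $V(\lambda)^{k_\lambda}$, where $k_\lambda$ is the graded Schur index of $\lambda$ with respect to $\Gamma$. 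Thus a finite-dimensional module $V=\bigoplus_\lambda V(\lambda)^{n_\lambda}$ admits a compatible $G$-grading if and only if $k_\lambda$ divides $n_\lambda$ for every $\lambda\in\Lambda^+$.

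Next I would compute $k_\lambda$ using \eqref{eq:bichar_for_Q}: the bicharacter $\hat{\beta}_\lambda^G$ on $\wh{G}\times\wh{G}$ is the pullback of $\hat{\beta}_\lambda^Q$ along $\wh\nu\colon\wh{G}\to Q$. If $G^u$ has trivial $3$-torsion, Theorem~\ref{th:E6_inner} gives $\hat{\beta}_\lambda^Q=1$ for every $\lambda$, so $k_\lambda=1$ and part~(a) is immediate. Otherwise the same theorem tells us that $\hat{\beta}_\lambda^Q$ is trivial exactly when $m_1-m_2+m_4-m_5\equiv 0\pmod 3$, and for the remaining weights has radical $Q_1$, hence descends to a nondegenerate alternating bicharacter $\bar{\beta}_\lambda$ on $Q/Q_1\simeq\wh{T}\simeq\ZZ_3^2$. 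Writing $\pi\colon Q\to Q/Q_1$ for the projection, $\hat{\beta}_\lambda^G$ factors through $A\bydef(\pi\circ\wh\nu)(\wh{G})\subseteq\wh{T}$. A short Pontryagin-duality computation, exploiting that $\pi$ is dual to the inclusion $T\hookrightarrow G^u$, identifies $A$ with the annihilator of $\ker(\nu|_T)$ inside $\wh{T}$ and so gives $|A|=|\nu(T)|$.

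Since $|A|\in\{1,3,9\}$, any proper subgroup of $\wh{T}\simeq\ZZ_3^2$ is automatically isotropic for $\bar{\beta}_\lambda$ (an alternating bicharacter on a cyclic group of prime order is trivial), so $\hat{\beta}_\lambda^G=1$ and $k_\lambda=1$ whenever $\nu|_T$ fails to be injective; together with the previous case this completes part~(a). When instead $\nu|_T$ is injective, $A=\wh{T}$ and $\bar{\beta}_\lambda|_{A\times A}$ remains nondegenerate, so the Brauer class in $B_G(\FF)$ has support $\nu(T)\simeq\ZZ_3^2$ and degree $k_\lambda=3$; combining with the divisibility criterion from the first paragraph yields part~(b). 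The step I expect to be the main subtlety is the duality identification $|A|=|\nu(T)|$: it relies on the fact that, in each of the three relevant cases, $T$ is a direct summand of $G^u$ (so that $Q=Q_1\times\wh{T}$), together with the explicit description of $T$ provided in Theorem~\ref{th:E6_inner}.
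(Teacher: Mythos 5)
Your proposal is correct and follows essentially the same route as the paper: pull back $\hat{\beta}^Q_\lambda$ along the dual of $\nu$, observe via Theorem \ref{th:E6_inner} that nontriviality occurs exactly when the image generates $Q$ modulo $T^\perp=Q_1$ (equivalently, $\nu$ is injective on $T$), and then translate the resulting graded Schur index ($1$ or $3$) into the divisibility condition on multiplicities. The paper's proof is merely terser, leaving the Pontryagin-duality step and the reduction to multiplicities implicit, whereas you spell both out correctly.
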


\begin{proof}
Recall that the graded Brauer invariant of $\lambda$ with respect to $\Gamma$ is determined by $\hat{\beta}_\lambda(\chi_1,\chi_2)=\hat{\beta}^Q_\lambda(\alpha_{\chi_1},\alpha_{\chi_2})$, for $\chi_1,\chi_2\in\wh{G}$, so Theorem \ref{th:E6_inner} implies that $\hat{\beta}_\lambda$ is trivial if $G^u$ has trivial $3$-torsion, $m_1-m_2+m_4-m_5\equiv 0\pmod{3}$ or the image of the homomorphism $\wh{G}\to Q,\,\chi\mapsto\alpha_\chi$ (which is dual to $G^u\to G$) does not generate $Q$ modulo $T^\perp$ (which is equivalent to saying that $\nu$ is not injective on $T$). In the remaining cases, the support of the Brauer invariant is $\nu(T)\simeq\ZZ_3^2$, so the graded Schur index is $3$.
\end{proof}

%----------------------------------------------------------------------------------------------------------------------
%SECTION 5: BRAUER INVARIANTS FOR E7
%----------------------------------------------------------------------------------------------------------------------

\section{Graded Brauer invariants for $E_7$}\label{se:E7}

Let $\cL$ be the simple Lie algebra of type $E_7$. The $14$ different fine gradings of $\cL$, up to equivalence, are listed in \cite[Fig. 6.2]{EK_mon} and are distinguished by their universal groups. However, to compute the graded Brauer invariants for most of these gradings, we find it more convenient to use models different from those mentioned in \cite{EK_mon}. So, we start by describing these models.

\subsection{A construction of $\cL$ from $\frsl_8(\FF)$}\label{ss:E7A7}

Let $V$ be an eight-dimensional vector space. The Lie algebra $\cL$ can be constructed (see \cite[Chapter 12]{Adams}) as the direct sum
\begin{equation}\label{eq:e7_sl}
\cL=\frsl(V)\oplus{\textstyle \bigwedge^4V}.
\end{equation}

Here $\frsl(V)$ is a subalgebra, and the bracket $[f,X]$ for $f\in \frsl(V)$ and $X\in\bigwedge^4V$ is given by the natural action of $\frsl(V)$ on $\bigwedge^4V$: $[f,X]=-[X,f]=f.X$. To define the bracket of two elements in $\bigwedge^4V$, which will be an element of $\frsl(V)$, we fix a determinant map (i.e., a nonzero skew-symmetric multilinear map) $\det:V^8\rightarrow \FF$, or, equivalently, fix a linear isomorphism $\bigwedge^8V\simeq  \FF$, and consider the nondegenerate symmetric bilinear form
\[
\begin{split}
\Omega:{\textstyle \bigwedge^4V}\times {\textstyle \bigwedge^4V}&\longrightarrow \FF,\\
(v_1\wedge v_2\wedge v_3\wedge v_4,w_1\wedge w_2\wedge w_3\wedge w_4)&\mapsto 
\det(v_1,v_2,v_3,v_4,w_1,w_2,w_3,w_4).
\end{split}
\]
$\Omega$ is invariant under the natural action of $\SL(V)$ (equivalently, of $\frsl(V)$) and is uniquely determined by this condition, up to a scalar. 
Then the bracket $[X,Y]$ of two elements $X,Y\in\bigwedge^4V$ is determined by the following formula:
\[
\tr\bigl(f[X,Y])=\Omega(f.X,Y)
\]
for all $f\in \frsl(V)$, where $\tr$ denotes the trace in $\frgl(V)=\End_\FF(V)$.

By construction, the direct sum decomposition \eqref{eq:e7_sl} is a grading by $\ZZ_2$, so the linear map $\sigma$ given by
\begin{equation}\label{eq:sigma}
\sigma(f)=f\text{ for }f\in \frsl(V),\qquad \sigma(X)=-X\text{ for }X\in\bigwedge^4V,
\end{equation}  
is an order $2$ automorphism of $\cL$.

There is a natural homomorphism 
\[
\begin{split}
\Phi:\SL(V)&\longrightarrow \Aut(\cL),\\
   f&\mapsto \begin{cases} d\mapsto fdf^{-1}&\text{for $d\in\frsl(V)$,}\\
                           X\mapsto f\cdot X=\bigl(\textstyle{\bigwedge^4}f\bigr)(X)&\text{for $X\in \bigwedge^4V$.}\end{cases}
\end{split}
\]
Its kernel consists of scalar operators (as they are the elements that commute with the elements of $\frsl(V)$) and hence $\ker\Phi=\langle \bi I\rangle$, where $I$ is the identity element of $\SL(V)$ and $\bi\in\FF$ is a primitive $4$-th root of unity. Since the algebraic group $\SL(V)$ is simply connected, there is a (unique) homomorphism $\tilde\Phi$ such that the following diagram is commutative:
\begin{equation}\label{eq:SLV_E7sc}
\begin{aligned}
\xymatrix{
& \tilde{\cG}\ar@{->>}[d]^{\pi}\\
\SL(V)\ar[r]^{\Phi} \ar[ur]^{\tilde\Phi} & \Aut(\cL)
}
\end{aligned}
\end{equation}
where $\tilde{\cG}$ is the simply connected group of type $E_7$.
Since $\SL(V)$ has rank $7$ (type $A_7$), its image in $\tilde{\cG}$ contains $Z(\tilde{\cG})$, so the kernel of $\tilde\Phi$ is a subgroup of index $2$ in the kernel of $\Phi$ and hence $\ker\tilde\Phi=\langle -I\rangle$.

\subsection{A construction of $\cL$ from $\frsp_8(\FF)$}\label{ss:E7C4}

Now let $b:V\times V\rightarrow \FF$ be a nondegenerate skew-symmetric bilinear form, and consider the associated $\SP(V,b)$-invariant (equivalently, $\frsp(V,b)$-invariant) nondegenerate symmetric bilinear form
\[
\begin{split}
\Omega_b:{\textstyle \bigwedge^4V}\times {\textstyle \bigwedge^4V}&\longrightarrow \FF,\\
(v_1\wedge v_2\wedge v_3\wedge v_4,w_1\wedge w_2\wedge w_3\wedge w_4)&\mapsto 
\det\Bigl(b(v_i,w_j)\Bigr).
\end{split}
\]
Take a symplectic basis $\{x_1,x_2,x_3,x_4,y_1,y_2,y_3,y_4\}$ of $V$, i.e., $b(x_i,y_j)=\delta_{ij}$ (Kronecker's delta) and $b(x_i,x_j)=b(y_i,y_j)=0$ for all $1\leq i,j\leq 4$. Also adjust $\Omega$ so that $\Omega(x_1\wedge x_2\wedge x_3\wedge x_4,y_1\wedge y_2\wedge y_3\wedge y_4)=1$. The space $\bigwedge^4 V$ has a basis consisting of the elements of the form $x_{i_1}\wedge \cdots\wedge x_{i_r}\wedge y_{j_1}\wedge\cdots\wedge y_{j_s}$, where $r,s\geq 0$, $r+s=4$, and $1\leq i_1< \cdots< i_r\leq 4$, $1\leq j_1< \cdots\leq j_s< 4$. We will refer to these elements as `basic wedges'.

Use the above symplectic basis of $V$ to identify $\frsl(V)$ with $\frsl_8(\FF)$ and $\frsp(V,b)$ with $\frsp_8(\FF)$. In particular, let $\frh$ be the Cartan subalgebra of $\frsp(V,b)$ consisting of the diagonal operators relative to this basis, and let $\veps_1,\ldots,\veps_4:\frh\rightarrow \FF$ be the linear maps such that $\veps_i(h)$ is the $i$-th diagonal element of $h$. We order the weights of $\frh$ lexicographically, so $\veps_1>\veps_2>\veps_3>\veps_4$, and this determines a system of simple roots for $\frsp_8(\FF)$, of type $C_4$. The fundamental weights are $\fw_i=\veps_1+\cdots+\veps_i$, $1\leq i\leq 4$. The basic wedge $x_1\wedge x_2\wedge x_3\wedge x_4$ is a maximal vector (i.e., it is killed by all positive root spaces), of weight $\fw_4$, so it generates a $\frsp(V,b)$-submodule of $\bigwedge^4V$ isomorphic to $V(\fw_4)$. In fact, all basic wedges are weight vectors. For instance, $x_2\wedge x_3\wedge x_4\wedge y_3$ has weight $\veps_2+\veps_4$.

Since both $\Omega$ and $\Omega_b$ are symmetric and nondegenerate, there is a symmetric operator $\tau:\bigwedge^4V\rightarrow \bigwedge^4V$, relative to $\Omega$, defined by
\[
\Omega_b(X,Y)=\Omega\bigl(\tau(X),Y\bigr),
\]
for all $X,Y\in\bigwedge^4V$. Note that 
\begin{equation}\label{eq:tau_x1x2x3x4}
\tau(x_1\wedge x_2\wedge x_3\wedge x_4)=x_1\wedge x_2\wedge x_3\wedge x_4,
\end{equation}
because 
\[
\Omega(x_1\wedge x_2\wedge x_3\wedge x_4,y_1\wedge y_2\wedge y_3\wedge y_4)=1=\Omega_b(x_1\wedge x_2\wedge x_3\wedge x_4,y_1\wedge y_2\wedge y_3\wedge y_4)
\] 
and $x_1\wedge x_2\wedge x_3\wedge x_4$ is orthogonal, relative to both $\Omega$ and $\Omega_b$, to any other basic wedge.

Let $f\in\End_\FF(V)$ and let $f^*$ denote the adjoint operator relative to $b$: 
\[
b\bigl(f(v),w\bigr)=b\bigl(v,f^*(w)\bigr),
\] 
for all $v,w\in V$. Then, for any $v_i,w_i\in V$, $1\leq i\leq 4$, and $f\in\GL(V)$, we have:
\[
\begin{split}
&\Omega_b\bigl(f(v_1)\wedge f(v_2)\wedge f(v_3)\wedge f(v_4),w_1\wedge w_2\wedge w_3\wedge w_4\bigr)\\
&=\det\Bigl(b\bigl(f(v_i),w_j\bigr)\Bigr)=\det\Bigl(b\bigl(v_i,f^*(w_j)\bigr)\Bigr),
\end{split}
\]
so $\Omega_b(f\cdot X,Y)=\Omega_b(X,f^*\cdot Y)$ for any $f\in\GL(V)$ and $X,Y\in\bigwedge^4 V$, where $f\cdot X$ denotes the natural action of $\GL(V)$ on $\bigwedge^4 V$. Hence, $\Omega_b(f.X,Y)=\Omega_b(X,f^*.Y)$ for any $f\in\frgl(V)$ and $X,Y\in\bigwedge^4 V$. (Note that the natural actions on $\bigwedge^4V$ of the elements of $\GL(V)$ and of $\frgl(V)$ are different.) Therefore, for any $f\in\SL(V)$ and $X,Y\in\bigwedge^4V$, the invariance of $\Omega$ under the action of $\SL(V)$ gives:
\[
\begin{split}
\Omega\bigl(\tau(f\cdot X),Y\bigr)&=\Omega_b(f\cdot X,Y)=\Omega_b(X,f^*\cdot Y)\\
&=\Omega\bigl(\tau(X),f^*\cdot Y\bigr)=\Omega\bigl((f^*)^{-1}\cdot \tau(X),Y\bigr).
\end{split}
\]
Similarly, $\Omega\bigl(\tau(f.X),Y\bigr)=-\Omega\bigl(f^*.\,\tau(X),Y\bigr)$ for $f\in\frsl(V)$. To summarize:
\begin{align}
\tau(f\cdot X)&=(f^*)^{-1}\cdot\tau(X) \quad\text{for $f\in \SL(V)$},\nonumber \\
\tau(f\,.\,X)&=-(f^*)\,.\,\tau(X) \quad\text{for $f\in\frsl(V)$},\label{eq:tau_ff*}
\end{align}
and all $X\in\bigwedge^4V$. In particular, $\tau$ commutes with the actions of $\SP(V,b)$ and $\frsp(V,b)$.

\begin{proposition}\label{pr:tau}
Extend the above operator $\tau$ on $\bigwedge^4V$ to an operator on $\cL=\frsl(V)\oplus\bigwedge^4V$, denoted by the same letter, by sending $f\mapsto -f^*$ for all $f\in \frsl(V)$. Then $\tau$ is an order $2$ automorphism of the Lie algebra $\cL$ that commutes with $\sigma$.
\end{proposition}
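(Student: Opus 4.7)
The plan is to verify three properties: that $\tau$ commutes with $\sigma$, that $\tau^2=\id$, and that $\tau$ preserves the Lie bracket. The first is immediate, since both $\sigma$ and $\tau$ stabilize each of the two summands $\frsl(V)$ and $\bigwedge^4V$, and on each summand $\sigma$ acts as a scalar. For $\tau^2=\id$ on the $\frsl(V)$-part, the skew-symmetry of $b$ (equivalently, $J^T=-J$ for the Gram matrix of $b$) gives $f^{**}=f$ for every $f\in\End_\FF(V)$, hence $\tau^2(f)=-(-f^*)^*=f$.

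For $\tau^2=\id$ on $\bigwedge^4V$, I would use the first identity in \eqref{eq:tau_ff*} to compute $\tau^2(f\cdot X)=\tau\bigl((f^*)^{-1}\cdot\tau(X)\bigr)=f\cdot\tau^2(X)$ for every $f\in\SL(V)$, so that $\tau^2$ is $\SL(V)$-equivariant. Since $\bigwedge^4V$ is irreducible as an $\SL(V)$-module (it realises the fundamental representation of $\SL_8$ with highest weight $\fw_4$), Schur's lemma forces $\tau^2$ to act as a scalar; evaluation at $x_1\wedge x_2\wedge x_3\wedge x_4$ together with \eqref{eq:tau_x1x2x3x4} identifies this scalar as $1$.

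Bracket preservation splits into three cases along the $\ZZ_2$-grading. For $f,g\in\frsl(V)$ the identity $\tau([f,g])=[\tau(f),\tau(g)]$ reduces at once to $[f,g]^*=-[f^*,g^*]$. For $f\in\frsl(V)$ and $X\in\bigwedge^4V$ it is the content of the second equation in \eqref{eq:tau_ff*}. The delicate case is $X,Y\in\bigwedge^4V$, where I must establish $[\tau(X),\tau(Y)]=-[X,Y]^*$ inside $\frsl(V)$. By nondegeneracy of the trace pairing on $\frsl(V)$, this is equivalent to
\[
\tr\bigl(f\cdot[\tau(X),\tau(Y)]\bigr)=-\tr\bigl(f\cdot[X,Y]^*\bigr)\quad\text{for every }f\in\frsl(V).
\]
The left-hand side equals $\Omega\bigl(f.\tau(X),\tau(Y)\bigr)$ by the defining formula of the bracket on $\bigwedge^4V$, while the right-hand side unfolds as $-\tr\bigl(f^*\cdot[X,Y]\bigr)=-\Omega(f^*.X,Y)$ by means of the trace identity $\tr(AB)=\tr(A^*B^*)$ applied to $A=f$ and $B=[X,Y]^*$. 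The coincidence of the two sides then follows by rewriting $\Omega\bigl(f.\tau(X),\tau(Y)\bigr)$ via the self-adjointness of $\tau$ with respect to $\Omega$ (which is forced by the symmetry of both $\Omega$ and $\Omega_b$), the $\SL(V)$-invariance of $\Omega$, and the second equation of \eqref{eq:tau_ff*}, finally invoking $\tau^2=\id$ to close the computation. The main obstacle, and the reason for organising the proof in this order, is precisely this last identity: the chain of manipulations for the $\bigwedge^4V\times\bigwedge^4V$ case does not terminate cleanly without $\tau^2=\id$ already in hand.
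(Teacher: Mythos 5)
Your proposal is correct and follows essentially the same route as the paper: the only (minor) difference is that you obtain $\tau^2=\id$ on $\bigwedge^4V$ by applying Schur's Lemma directly to the $\SL(V)$-equivariant operator $\tau^2$ on the irreducible module $\bigwedge^4V$, whereas the paper first shows that the form $\Omega'(X,Y)=\Omega\bigl(\tau(X),\tau(Y)\bigr)$ is $\SL(V)$-invariant, hence equals $\mu\,\Omega$, and then uses the $\Omega$-symmetry of $\tau$ to get $\tau^2=\mu\,\id$; both arguments normalize the scalar to $1$ by evaluating at $x_1\wedge x_2\wedge x_3\wedge x_4$. Your treatment of the remaining cases, in particular the chain of identities establishing $[\tau(X),\tau(Y)]=-[X,Y]^*$ via the nondegenerate trace pairing, matches the paper's computation.
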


\begin{proof}
Consider the bilinear map $\Omega'$ on $\bigwedge^4V$ given by 
\[
\Omega'(X,Y)=\Omega\bigl(\tau(X),\tau(Y)\bigr).
\] 
For any $f\in\SL(V)$ and $X,Y\in\bigwedge^4V$, we have
\[
\begin{split}
\Omega'(f\cdot X,f\cdot Y)&=\Omega\bigl(\tau(f\cdot X),\tau(f\cdot Y)\bigr)\\
&=\Omega\bigl((f^*)^{-1}\cdot \tau(X),(f^*)^{-1}\cdot \tau(Y)\bigr)=\Omega\bigl(\tau(X),\tau(Y)\bigr)=\Omega'(X,Y),
\end{split}
\]
so $\Omega'$ is also $\SL(V)$-invariant and hence there is a nonzero scalar $\mu\in\FF$ such that $\Omega'=\mu\,\Omega$. This means that $\tau$ is a similarity of the form $\Omega$ (with multiplier $\mu$). But $\tau$ is a symmetric operator, so for any $X,Y\in\bigwedge^4V$, we have:
\[
\mu\,\Omega(X,Y)=\Omega'(X,Y)=\Omega\bigl(\tau(X),\tau(Y)\bigr)=\Omega\bigl(\tau^2(X),Y\bigr),
\]
hence $\tau^2=\mu\,\id$. But we know that $x_1\wedge x_2\wedge x_3\wedge x_4$ is an eigenvector of $\tau$ with eigenvalue $1$, so $\mu=1$.

Now, for any $f\in \frsl(V)$ and $X,Y\in\bigwedge^4V$, we have:
\[
\begin{split}
\tr\bigl(f[\tau(X),\tau(Y)]\bigr)&=\Omega\bigl(f.\tau(X),\tau(Y)\bigr)=-\Omega\bigl(\tau(f^*.X),\tau(Y)\bigr)=-\Omega\bigl(f^*.X,Y\bigr)\\
& = -\tr\bigl(f^*[X,Y]\bigr)=-\tr\bigl([X,Y]^*f\bigr)=-\tr\bigl(f[X,Y]^*\bigr),
\end{split}
\]
hence we obtain $[\tau(X),\tau(Y)]=-[X,Y]^*$, which, together with Equation \eqref{eq:tau_ff*}, finishes the proof.
\end{proof}

There is a natural homomorphism $c:\bigwedge^4V\rightarrow \bigwedge^2V$ of  $\frsp(V,b)$-modules given by `partial contraction' as follows: 
\[
c(v_1\wedge v_2\wedge v_3\wedge v_4)=\sum_{\pi}(-1)^\pi b(v_{\pi(1)},v_{\pi(2)})v_{\pi(3)}\wedge v_{\pi(4)},
\]
where $\pi$ runs over the permutations of $\{1,2,3,4\}$ with $\pi(1)<\pi(2)$ and $\pi(3)<\pi(4)$ and $(-1)^\pi$ denotes the sign of $\pi$. (See, for instance, \cite[\S 17.2]{FH}.) The kernel of $c$ is the simple module $V(\fw_4)$ generated by the basic wedge $x_1\wedge x_2\wedge x_3\wedge x_4$. Also, there is the contraction $\bigwedge^2V\rightarrow \FF$, $v_1\wedge v_2\mapsto b(v_1,v_2)$, whose kernel is $V(\fw_2)$. Thus, we have the following decompositions of $\frsp(V,b)$-modules:
\begin{align*}
\textstyle \bigwedge^4V&\simeq V(\fw_4)\oplus V(\fw_2)\oplus V(0),\\
\frsl(V)&\simeq V(2\fw_1)\oplus V(\fw_2),
\end{align*} 
where $V(2\fw_1)$ is the adjoint module and the copy of $V(\fw_2)$ is given by the subspace of symmetric operators $\{f\in\frsl(V): f^*=f\}$. The trivial $\frsp(V,b)$-submodule of $\bigwedge^4 V$ can be explicitly described as follows. Let $\hat b:\bigwedge^4V\rightarrow \FF$ be the `full contraction' :
\[
\hat b(v_1\wedge v_2\wedge v_3\wedge v_4)=b(v_1,v_2)b(v_3,v_4)-b(v_1,v_3)b(v_2,v_4)+b(v_1,v_4)b(v_2,v_3).
\]
(This is the same as $\frac12$ times the composition of contractions $c$ and $b$.)
Then there is a unique element $\tilde b\in\bigwedge^4V$ such that $\Omega(\tilde b,X)=\hat b(X)$ for all $X$, and the $\frsp(V,b)$-invariance of $\hat b$ is equivalent to the fact that $\tilde b$ is annihilated by $\frsp(V,b)$, so $\FF\tilde b$ is the trivial $\frsp(V,b)$-submodule in $\bigwedge^4V$. One checks at once that $\hat b$ vanishes on all basic wedges except $x_i\wedge x_j\wedge y_i\wedge y_j$ for $1\leq i<j\leq 4$, and we get 
\[
\tilde b=-\sum_{1\leq i<j\leq 4}x_i\wedge x_j\wedge y_i\wedge y_j.
\]

\begin{lemma}\label{le:tau}
The order $2$ automorphism $\tau$ of $\cL$ is the identity on $\frsp(V,b)$ and on the copies of $V(\fw_4)$ and $V(0)$ in $\bigwedge^4V$, and it is minus the identity on the space $\{f\in\frsl(V):f^*=f\}$ and on the copy of $V(\fw_2)$ in $\bigwedge^4V$.
\end{lemma}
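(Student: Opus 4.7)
The plan is to treat the two summands $\frsl(V)$ and $\bigwedge^4V$ of $\cL$ separately. On $\frsl(V)$, where $\tau$ sends $f$ to $-f^*$ by definition (Proposition~\ref{pr:tau}), both assertions are immediate: $\tau(f)=f$ precisely when $f^*=-f$, i.e.\ when $f\in\frsp(V,b)$, and $\tau(f)=-f$ precisely when $f^*=f$.

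On $\bigwedge^4V$, the strategy is Schur's Lemma. By Equation~\eqref{eq:tau_ff*}, $\tau$ commutes with the action of $\frsp(V,b)$, and the decomposition $\bigwedge^4V=V(\fw_4)\oplus V(\fw_2)\oplus V(0)$ is into pairwise non-isomorphic simple $\frsp(V,b)$-modules, so $\tau$ acts by a scalar $\mu_j\in\{\pm 1\}$ on each summand (the square being forced by $\tau^2=\id$, proved in Proposition~\ref{pr:tau}). Equivalently: both $\frsp(V,b)$-invariant symmetric bilinear forms $\Omega$ and $\Omega_b$ pair the three summands trivially and restrict non-degenerately to each, and by Schur the restriction of $\Omega_b$ to $V(\fw_j)$ equals $\mu_j$ times that of $\Omega$, in view of $\Omega(\tau(X),Y)=\Omega_b(X,Y)$. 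It thus suffices to evaluate both forms on one carefully chosen pair of test vectors in each summand.

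For $V(\fw_4)$, taking $X=x_1\wedge x_2\wedge x_3\wedge x_4$ and $Y=y_1\wedge y_2\wedge y_3\wedge y_4$ (both in $\ker c=V(\fw_4)$) gives $\Omega(X,Y)=1=\det\bigl(b(x_i,y_j)\bigr)=\Omega_b(X,Y)$, so $\mu_4=+1$; this is also an immediate consequence of Equation~\eqref{eq:tau_x1x2x3x4}. For $V(0)=\FF\tilde b$, the identity $\Omega(\tilde b,\cdot)=\hat b(\cdot)$ together with $\hat b(x_i\wedge x_j\wedge y_i\wedge y_j)=-1$ yields $\Omega(\tilde b,\tilde b)=6$, while a short determinantal calculation shows that, for $i<j$ and $k<l$, $\Omega_b(x_i\wedge x_j\wedge y_i\wedge y_j,\,x_k\wedge x_l\wedge y_k\wedge y_l)$ vanishes unless $(i,j)=(k,l)$ (in which case it equals $1$); hence $\Omega_b(\tilde b,\tilde b)=6$ as well, and $\mu_0=+1$.

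The only case requiring some care is $V(\fw_2)$. The $\fw_2$-weight space of $\bigwedge^4V$ is two-dimensional, spanned by $B=x_1\wedge x_2\wedge x_3\wedge y_3$ and $C=x_1\wedge x_2\wedge x_4\wedge y_4$, and since $c(B)=c(C)=x_1\wedge x_2$, the $V(\fw_4)$-component there is $\FF(B-C)$. The $\Omega$-pairings with the basis $B'=x_3\wedge y_1\wedge y_2\wedge y_3$, $C'=x_4\wedge y_1\wedge y_2\wedge y_4$ of the $-\fw_2$ weight space are $\Omega(B,B')=\Omega(C,C')=0$ and $\Omega(B,C')=\Omega(C,B')=-1$, so $\Omega$-orthogonality of non-isomorphic summands identifies $\FF(B+C)$ and $\FF(B'+C')$ as the $V(\fw_2)$-components of these weight spaces. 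Evaluating then gives $\Omega(B+C,B'+C')=-2$ and, with $\Omega_b(B,B')=\Omega_b(C,C')=1$ being the only nonzero summands, $\Omega_b(B+C,B'+C')=2$, whence $\mu_2=-1$. The principal technical burden is the careful bookkeeping of signs in these wedge pairings, but no deeper obstacle arises.
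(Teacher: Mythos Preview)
Your proof is correct and shares the overall structure with the paper's: both reduce the question on $\bigwedge^4V$ to determining three scalars via Schur's Lemma. The execution differs, however. For $V(0)$ and $V(\fw_2)$ the paper avoids your bilinear-form comparisons entirely: it simply computes $\tau(x_i\wedge x_j\wedge y_i\wedge y_j)=x_{i'}\wedge x_{j'}\wedge y_{i'}\wedge y_{j'}$ (with $\{i,j,i',j'\}=\{1,2,3,4\}$) directly from the definition of $\tau$ via $\Omega_b$, which simultaneously gives $\tau(\tilde b)=\tilde b$ and shows that $\tau$ is not the identity on $\bigwedge^4V$; the value $\mu_2=-1$ then follows by elimination, with no need to locate explicit $V(\fw_2)$-weight vectors. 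Your approach is more hands-on---you actually exhibit the highest-weight vector $B+C$ of the $V(\fw_2)$-summand and evaluate both forms on it---which is longer but has the merit of being self-contained and making the scalar $\mu_2$ visible rather than inferred.
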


\begin{proof}
The action of $\tau$ on $\frsl(V)$ is clear. In particular, $\tau$ is a homomorphism of $\frsp(V,b)$-modules. Equation \eqref{eq:tau_x1x2x3x4} implies that the restriction of $\tau$ to $V(\fw_4)$ is the identity. For any  $i<j$, a straightforward computation gives:
\[
\tau(x_i\wedge x_j\wedge y_i\wedge y_j)=x_{i'}\wedge x_{j'}\wedge y_{i'}\wedge y_{j'},
\]
where $i'<j'$ and $\{i,j,i',j'\}=\{1,2,3,4\}$. It follows that $\tau(\tilde b)=\tilde b$ and $\tau$ is not the identity on $\bigwedge^4V$. 
By Schur's Lemma, the restriction of $\tau$ to the copy of $V(\fw_2)$ in $\bigwedge^4V$ is a scalar, so it has to be $-1$.
\end{proof}

As the order $2$ automorphisms $\sigma$ in Equation \eqref{eq:sigma} and $\tau$ in Proposition \ref{pr:tau} commute with one another, they induce a $\ZZ_2^2$-grading on $\cL$ with the following components:
\begin{equation}\label{eq:Z22onE7}
\begin{split}
\cL_{(\bar 0,\bar 0)}&=\frsp(V,b)\simeq V(2\fw_1),\\
\cL_{(\bar 0,\bar 1)}&=\{ f\in\frsl(V): f^*=f\}\simeq V(\fw_2),\\
\cL_{(\bar 1,\bar 0)}&=\{X\in {\textstyle \bigwedge^4V}: \tau(X)=X\}\simeq V(\fw_4)\oplus V(0),\\
\cL_{(\bar 1,\bar 1)}&=\{X\in {\textstyle \bigwedge^4V}: \tau(X)=-X\}\simeq V(\fw_2).
\end{split}
\end{equation}
Thus, $\cL$ is expressed in terms of $\frsl_8(\FF)$ and its modules. Restricting the action of $\SL(V)$ on $\cL$, we obtain a homomorphism $\SP(V,b)\to\Aut(\cL)$. The kernel is $\langle \bi I\rangle\cap\SP(V,b)=\langle-I\rangle$, so we obtain an embedding $\PSP(V,b)\hookrightarrow\Aut(\cL)$. We conclude this discussion by the following result, which will allow us to deal with $7$ fine gradings of $\cL$ in one stroke.

\begin{theorem}\label{th:e7c4}
Let $\cL$ be the simple Lie algebra of type $E_7$ as constructed in \eqref{eq:e7_sl} and let $\sigma$ and $\tau$ be the two commuting order $2$ automorphisms defined by Equation~\eqref{eq:sigma} and Proposition~\ref{pr:tau}, respectively, which give us the $\ZZ_2^2$-grading \eqref{eq:Z22onE7}.
\begin{romanenumerate}
\item
The subgroup $\langle\sigma,\tau\rangle$ of $\Aut(\cL)$ is nontoral. Hence, $\hat{\beta}^P(\sigma,\tau)\ne 1$ for any quasitorus $P$ in $\Aut(\cL)$ containing $\sigma$ and $\tau$.
\item 
The centralizer in $\Aut(\cL)$ of the subgroup $\langle\sigma,\tau\rangle$ is the direct product of itself and the copy of $\PSP(V,b)$ embedded into $\Aut(\cL)$ as described above:
\[
\Centr_{\Aut(\cL)}\bigl(\langle \sigma,\tau\rangle\bigr)=\langle \sigma,\tau\rangle\times \PSP(V,b).
\]
\item
For any maximal quasitorus $Q_0$ in $\PSP(V,b)$, the quasitorus $Q\bydef\langle \sigma,\tau\rangle\times Q_0$ is maximal in $\Aut(\cL)$, and $Q_0$ is the radical of $\hat{\beta}^Q$. 
\end{romanenumerate}
\end{theorem}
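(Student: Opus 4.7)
The plan is to handle the three parts in order, using the $\ZZ_2^2$-decomposition \eqref{eq:Z22onE7} and elementary $\frsp(V,b)$-module theory throughout.

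Part (i) is immediate from Lemma \ref{lm:3}: the fixed subalgebra $\Fix_\cL(\sigma,\tau)=\cL_{(\bar 0,\bar 0)}=\frsp(V,b)$ has rank $4<7=\rank\cL$, so it cannot contain a Cartan subalgebra of $\cL$. Hence $\sigma$ and $\tau$ do not strongly commute in $\Aut(\cL)$, which is exactly $\hat\beta^P(\sigma,\tau)\ne 1$ for any quasitorus $P\ni\sigma,\tau$.

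For part (ii), the inclusion $\PSP(V,b)\times\langle\sigma,\tau\rangle\subseteq\Centr_{\Aut(\cL)}(\langle\sigma,\tau\rangle)$ is direct: the image of $\SP(V,b)$ preserves \eqref{eq:e7_sl} and so commutes with $\sigma$; it commutes with $\tau$ by Proposition \ref{pr:tau}; and $\langle\sigma,\tau\rangle\cap\PSP(V,b)=1$ since $\PSP(V,b)$ acts faithfully on $\frsp(V,b)$ while $\sigma,\tau$ act trivially there. For the reverse, I would take $\phi$ in the centralizer; then $\phi$ preserves each of the four components of \eqref{eq:Z22onE7}, and since the Lie algebra of type $C_4$ has no outer automorphisms, I can multiply $\phi$ by a suitable element of $\PSP(V,b)$ to arrange $\phi|_{\frsp(V,b)}=\id$. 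Schur's lemma, applied to each $\frsp(V,b)$-isotypic piece, realizes $\phi$ as scalars $a,b,c,d$ acting on $\cL_{(\bar 0,\bar 1)}\simeq V(\fw_2)$, on $V(\fw_4)\subseteq\cL_{(\bar 1,\bar 0)}$, on $V(0)=\FF\tilde b\subseteq\cL_{(\bar 1,\bar 0)}$, and on $\cL_{(\bar 1,\bar 1)}\simeq V(\fw_2)$ respectively. Compatibility with the (nonzero) brackets $[\cL_{(\bar 0,\bar 1)},\cL_{(\bar 0,\bar 1)}]$, $[V(\fw_4),V(\fw_4)]$, $[\cL_{(\bar 1,\bar 1)},\cL_{(\bar 1,\bar 1)}]$ and $[\cL_{(\bar 0,\bar 1)},V(\fw_4)]$ forces $a^2=b^2=d^2=1$ and $d=ab$, while $b=c$ follows from $[\tilde b,\cL_{(\bar 0,\bar 1)}]\ne 0$. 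To justify the latter I would identify $\Fix_\cL(\tau)$ with $\fre_6\oplus\FF\tilde b$ (the Levi subalgebra attached to the involution $\tau$, with $\tilde b$ generating the one-dimensional center), and use that the associated $\ZZ$-grading of $\cL$ has degree $\pm 1$ parts $V\oplus V^*=\cL_{(\bar 0,\bar 1)}\oplus\cL_{(\bar 1,\bar 1)}$; because $\sigma(\tilde b)=-\tilde b$ anti-commutes $\sigma$ with $\ad\tilde b$, the map $\ad\tilde b$ swaps $V$ and $V^*$ and restricts to an isomorphism $\cL_{(\bar 0,\bar 1)}\to\cL_{(\bar 1,\bar 1)}$. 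The four remaining values $(a,b,b,ab)$ are exactly those realized by $\id,\sigma,\tau,\sigma\tau$, proving equality.

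For part (iii), maximality of $Q$ in $\Aut(\cL)$ follows formally from (ii): any quasitorus $Q'\supseteq Q$ lies in $\PSP(V,b)\times\langle\sigma,\tau\rangle$, projects to a quasitorus of $\PSP(V,b)$ containing $Q_0$ (hence equal to $Q_0$ by maximality), and meets $\langle\sigma,\tau\rangle$ in the whole group, so $Q'=Q$. The containment $Q_0\subseteq\rad\hat\beta^Q$ rests on two observations. First, from \eqref{eq:SLV_E7sc}, $\tilde\Phi(-I)=1$ while the nontrivial element of $Z(\tilde{\cG})$ has the form $\tilde\Phi(\pm\bi I)$ and $\bi I\notin\SP(V,b)$, so $\tilde\Phi$ restricts to an embedding $\PSP(V,b)=\SP(V,b)/\langle-I\rangle\hookrightarrow\tilde{\cG}$ whose image avoids $Z(\tilde{\cG})$; elements of $Q_0$ therefore admit pairwise commuting preimages in $\tilde{\cG}$. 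Second, for any preimage $\tilde z\in\tilde{\cG}$ of $z\in\{\sigma,\tau\}$, the map $g\mapsto[\tilde z,g]$ is an algebraic homomorphism from the embedded copy of $\PSP(V,b)$ into $Z(\tilde{\cG})$ and so is trivial by simplicity of $\PSP(V,b)$. Combined, these yield $\hat\beta^Q(Q_0,Q)=1$. The reverse inclusion $\rad\hat\beta^Q\subseteq Q_0$ is a direct consequence of (i): the restriction of $\hat\beta^Q$ to $\langle\sigma,\tau\rangle$ is a nondegenerate alternating form on $\ZZ_2^2$, so any $z=q_0 s\in Q$ with $s\ne 1$ satisfies $\hat\beta^Q(z,s')=\hat\beta^Q(s,s')\ne 1$ for some $s'\in\langle\sigma,\tau\rangle$.

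The main obstacle I anticipate is pinning down $b=c$ in (ii): it is the one relation that does not come from a routine Schur/bracket computation, but rather from the structural identification of $\Fix_\cL(\tau)$ as the Levi $\fre_6\oplus\FF\tilde b$ and the observation that $\sigma$ reverses the associated grading derivation $\ad\tilde b$.
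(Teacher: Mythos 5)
Your proof is correct and follows the same overall skeleton as the paper: Lemma \ref{lm:3} applied to $\Fix_\cL(\sigma,\tau)=\frsp(V,b)$ for (i), a Schur-type analysis of the kernel of the restriction map $\Centr_{\Aut(\cL)}(\langle\sigma,\tau\rangle)\to\Aut(\frsp(V,b))$ for (ii), and the lift $\iota\colon\PSP(V,b)\hookrightarrow\tilde{\cG}$ induced by $\tilde\Phi$ for (iii). The one genuine divergence is in part (ii). The paper only introduces scalars on the two $27$-dimensional components $\cL_{(\bar 0,\bar 1)}$ and $\cL_{(\bar 1,\bar 1)}$, derives $\lambda^2=\mu^2=1$ from the nonvanishing of $[\cL_{(\bar 0,\bar 1)},\cL_{(\bar 0,\bar 1)}]$ and $[\cL_{(\bar 1,\bar 1)},\cL_{(\bar 1,\bar 1)}]$, and then concludes immediately because these two components generate $\cL$ as a Lie algebra, so $\varphi$ is determined by $(\lambda,\mu)$ and the four possibilities are exhausted by $\{\id,\sigma,\tau,\sigma\tau\}$. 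You instead put scalars on all four isotypic pieces, including the decomposition $\cL_{(\bar 1,\bar 0)}=V(\fw_4)\oplus\FF\tilde b$, which forces you to prove the extra relation $b=c$; your justification via the identification $\Fix_\cL(\tau)\simeq\fre_6\oplus\FF\tilde b$ and the invertibility of $\ad\tilde b$ on the degree $\pm1$ parts is sound (the dimension count $36+42+1=79$ pins down the fixed subalgebra, and $\FF\tilde b$ is its only trivial $\frsp(V,b)$-constituent, hence its center), but it is exactly the complication the paper's generation argument is designed to avoid. Your treatment of (iii) is a slightly more hands-on version of the paper's: where you check separately that $\iota(Q_0)$ is an abelian set of preimages and that $g\mapsto[\tilde z,\iota(g)]$ is a trivial homomorphism into $Z(\tilde{\cG})$ by connectedness, the paper packages both facts into the single identity $\pi^{-1}\bigl(\Centr_{\Aut(\cL)}(\langle\sigma,\tau\rangle)\bigr)=\pi^{-1}(\langle\sigma,\tau\rangle)\times\iota\bigl(\PSP(V,b)\bigr)$; the content is the same. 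Do make sure to record the small nonvanishing checks you flag as ``(nonzero)'' (e.g.\ $[V(\fw_4),V(\fw_4)]\ne 0$, $[\cL_{(\bar 0,\bar 1)},V(\fw_4)]\ne 0$), since your route relies on more of them than the paper's does.
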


\begin{proof}
The subalgebra of elements fixed by $\sigma$ and $\tau$ is $\frsp(V,b)$ of rank $4$, so (i) follows by Lemma \ref{lm:3}.

Since $\PSP(V,b)$ preserves the homogeneous components of the $\ZZ_2^2$-grading \eqref{eq:Z22onE7}, it is contained in $\Centr_{\Aut(\cL)}\bigl(\langle \sigma,\tau\rangle\bigr)$. Since $\sigma$ and $\tau$ act trivially on $\Sp(V,b)$, the intersection of $\langle \sigma,\tau\rangle$ and $\PSP(V,b)$ is trivial. The map
\[
\begin{split}
\Centr_{\Aut(\cL)}\bigl(\langle \sigma,\tau\rangle\bigr)&\longrightarrow \Aut\bigl(\frsp(V,b)\bigr)\simeq \PSP(V,b),\\
f\quad&\mapsto\quad f\vert_{\frsp(V,b)},
\end{split}
\]
is a split surjection with kernel $\{\varphi\in\Centr_{\Aut(\cL)}\bigl(\langle \sigma,\tau\rangle\bigr):
 \varphi\vert_{\frsp(V,b)}=\id\}$. If $\varphi$ is in this kernel, by Schur's Lemma, there are nonzero scalars $\lambda,\mu\in\FF$ such that $\varphi\vert_{\cL_{(\bar 0,\bar 1)}}=\lambda\,\id$, $\varphi\vert_{\cL_{(\bar 1,\bar 1)}}=\mu\,\id$. But $0\ne [\cL_{(\bar 0,\bar 1)},\cL_{(\bar 0,\bar 1)}]\subseteq \frsp(V,b)$, so  $\lambda^2=1$. In the same vein, $\mu^2=1$. Since $\cL_{(\bar 0,\bar 1)}$ and $\cL_{(\bar 1,\bar 1)}$ generate $\cL$, it follows that $\varphi$ is determined by $\lambda$ and $\mu$, and hence it belongs to $\langle\sigma,\tau\rangle$, completing the proof of (ii).

The first part of (iii) is clear. The homomorphism $\tilde{\Phi}$ of \eqref{eq:SLV_E7sc} has kernel $\langle-I\rangle$ and hence induces an embedding $\iota:\PSP(V,b)\hookrightarrow\tilde{\cG}$, which completes the following commutative diagram:
%\begin{equation}\label{eq:SpVb_E7sc}
%\begin{aligned}
\[
\xymatrix{
& \tilde{\cG}\ar@{->>}[d]^{\pi}\\
\PSP(V,b)\ar@{^{(}->}[r] \ar[ur]^{\iota} & \Aut(\cL)
}
\]
%\end{aligned}
%\end{equation}
where, as before, $\tilde{\cG}$ is the simply connected group of type $E_7$. Since $\pi^{-1}\bigl(\PSP(V,b)\bigr)=Z(\tilde{\cG})\times\iota\bigl(\PSP(V,b)\bigr)$, we obtain
\[
\pi^{-1}\Bigl(\Centr_{\Aut(\cL)}\bigl(\langle\sigma,\tau\rangle\bigr)\Bigr)
=\pi^{-1}\bigl(\langle \sigma,\tau\rangle\bigr)\rtimes \iota\bigl(\PSP(V,b)\bigr).
\]
The action of the connected group $\iota\bigl(\PSP(V,b)\bigr)$ on the finite group $\pi^{-1}\bigl(\langle\sigma,\tau\rangle\bigr)$ is necessarily trivial, so we actually have a direct product. It follows that, for any maximal quasitorus $Q_0$ in $\PSP(V,b)$, we have
\[
\pi^{-1}\bigl(\langle \sigma,\tau\rangle\times Q_0\bigr)=\pi^{-1}\bigl(\langle\sigma,\tau\rangle\bigr)\times \iota(Q_0),
\]
which proves that $Q_0$ lies in the radical of $\hat\beta^Q$. Now part (i) completes the proof of (iii).
\end{proof}

\subsection{Fine gradings}\label{ss:E7_main}

Now we are going to compute the graded Brauer invariants for the $14$ fine gradings of $\cL$, the simple Lie algebra of type $E_7$. We divide them into six cases, labeled by their universal groups, according to the method of computation (in particular, what model of $\cL$ will be used).

\medskip

$\boxed{\ZZ^4\times\ZZ_2^2,\, \ZZ^2\times\ZZ_2^4,\, \ZZ\times\ZZ_2^5,\, \ZZ\times\ZZ_2^6,\, \ZZ_2^7,\, \ZZ_2^5\times\ZZ_4,\text{ and }\ZZ_2^8}$\quad
Up to conjugacy, there are seven maximal quasitori in $\PSP_8(\FF)\simeq\Aut(\Sp_8(\FF))$ (see \cite[Example 5.3]{EldFine} or \cite[Example 3.72]{EK_mon}), so Theorem \ref{th:e7c4}(iii) gives us seven fine gradings on $\cL$, whose universal groups are
\[
\ZZ_2^2\times \begin{cases}
\ZZ^4\\ \ZZ_2^2\times\ZZ^2 \\ \ZZ_2^3\times \ZZ\\ 
\ZZ_2^4\times\ZZ\\ \ZZ_2^5\\  \ZZ_2^3\times\ZZ_4\\ \ZZ_2^6
\end{cases}
\]
as stated above. In all cases, there is a distinguished subgroup $T\simeq\ZZ_2^2$ associated to the automorphisms $\sigma$ and $\tau$ (the first factor); $T^\perp$ is the radical of $\hat{\beta}^Q$. 

\medskip

$\boxed{\ZZ^7\text{ and }\ZZ\times\ZZ_3^3}$\quad
There is no $2$-torsion here, so the Brauer invariants are trivial by Corollary \ref{cor:Brauer_E6E7}.

\medskip

$\boxed{\ZZ_2^3\times\ZZ_4^2\text{ and }\ZZ\times\ZZ_2\times\ZZ_4^2}$\quad
Here we go back to the model of $\cL$ in terms of $\Sl_8(\FF)$, as described in \eqref{eq:e7_sl}. 

\begin{lemma}\label{le:z42s}
Recall the commuting order $2$ automorphisms $\sigma$ and $\tau$ of $\cL$ defined by Equation \eqref{eq:sigma} and Proposition \ref{pr:tau}, respectively, and the homomorphisms $\Phi:\SL(V)\to\Aut(\cL)$ and $\tilde{\Phi}:\SL(V)\to\tilde{\cG}$ in \eqref{eq:SLV_E7sc}.
\begin{romanenumerate}
\item The centralizer of $\sigma$ in $\Aut(\cL)$ is the following:
\[
\Centr_{\Aut(\cL)}(\sigma)=\Phi\bigl(\SL(V)\bigr)\rtimes \langle\tau\rangle \simeq \SL(V)/\langle \bi I\rangle \rtimes \ZZ_2.
\]
\item Let $\tilde\sigma\in \tilde{\cG}$ be a preimage of $\sigma$ under $\pi:\tilde{\cG}\to\Aut(\cL)$. Then 
\[
\Centr_{\tilde{\cG}}(\tilde{\sigma})=\tilde{\Phi}\bigl(\SL(V)\bigr) \simeq \SL(V)/\langle -I\rangle.
\]
\end{romanenumerate}
\end{lemma}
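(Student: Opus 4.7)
\emph{Part (i).} The strategy is to establish both inclusions in $\Centr_{\Aut(\cL)}(\sigma) = \Phi(\SL(V)) \rtimes \langle\tau\rangle$. For $\supseteq$, note that $\Phi(\SL(V))$ preserves the eigenspaces of $\sigma$ by construction and $\tau$ centralizes $\sigma$ by Proposition \ref{pr:tau}; a short computation based on \eqref{eq:tau_ff*} gives $\tau\,\Phi(f)\,\tau^{-1}=\Phi\bigl((f^*)^{-1}\bigr)$, so $\Phi(\SL(V))$ is normalized by $\tau$, and $\tau\notin\Phi(\SL(V))$ because $\tau|_{\frsl(V)}$ is an outer automorphism of $\frsl(V)$ while every $\Phi(f)|_{\frsl(V)}=\mathrm{Ad}(f)$ is inner. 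Hence $\Phi(\SL(V))\rtimes\langle\tau\rangle$ is a genuine semidirect product sitting inside $\Centr_{\Aut(\cL)}(\sigma)$.

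For the reverse inclusion, the plan is to analyze the restriction homomorphism $\rho\colon\Centr_{\Aut(\cL)}(\sigma)\to\Aut(\frsl(V))\simeq\PGL(V)\rtimes\ZZ_2$ obtained from the action on $\cL^\sigma=\frsl(V)$. Under $\rho$, the subgroup $\Phi(\SL(V))$ maps onto the inner part $\PGL(V)$ and $\tau$ realizes the outer generator, so $\rho$ already surjects $\Phi(\SL(V))\rtimes\langle\tau\rangle$ onto $\Aut(\frsl(V))$. It therefore suffices to prove $\ker\rho\subseteq\Phi(\SL(V))$. For $\varphi\in\ker\rho$, Schur's lemma applied to the irreducible $\frsl(V)$-module $\bigwedge^4 V$ yields $\varphi|_{\bigwedge^4 V}=\lambda\,\id$ for some $\lambda\in\FF^\times$; bracket compatibility $\varphi([X,Y])=[\varphi(X),\varphi(Y)]$ for $X,Y\in\bigwedge^4 V$, combined with the non-vanishing of $[\bigwedge^4 V,\bigwedge^4 V]$ in $\frsl(V)$ (else $\bigwedge^4 V$ would be an abelian ideal, contradicting simplicity of $\cL$), forces $\lambda^2=1$. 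The case $\lambda=1$ gives $\varphi=\id$; the case $\lambda=-1$ gives $\varphi=\sigma=\Phi(\zeta I)$ for $\zeta$ a primitive $8$th root of unity. In either case $\varphi\in\Phi(\SL(V))$, and the isomorphism $\Phi(\SL(V))\simeq\SL(V)/\langle\bi I\rangle$ then follows from $\ker\Phi=\langle\bi I\rangle$.

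\emph{Part (ii).} The plan is a connectedness and dimension comparison. Since $\tilde\Phi(\SL(V))$ is connected and projects onto $\Phi(\SL(V))\subseteq\Centr_{\Aut(\cL)}(\sigma)$, the commutator map $g\mapsto[g,\tilde\sigma]$ carries $\tilde\Phi(\SL(V))$ continuously into the finite group $Z(\tilde\cG)$ and vanishes at the identity, hence is identically trivial; thus $\tilde\Phi(\SL(V))\subseteq\Centr_{\tilde\cG}(\tilde\sigma)$. By Steinberg's theorem (as invoked in the proof of Lemma \ref{lm:3}), $\Centr_{\tilde\cG}(\tilde\sigma)$ is connected, and its dimension equals $\dim\cL^\sigma=\dim\frsl(V)=63=\dim\tilde\Phi(\SL(V))$, so the two connected groups coincide. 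The identification $\tilde\Phi(\SL(V))\simeq\SL(V)/\langle -I\rangle$ is immediate from $\ker\tilde\Phi=\langle -I\rangle$ established in Subsection \ref{ss:E7A7}. The delicate point of the whole proof is the kernel calculation in (i): Schur's lemma alone does not pin down $\lambda$, and one must exploit the bracket $\bigwedge^4 V\times\bigwedge^4 V\to\frsl(V)$ and the simplicity of $\cL$ to reduce $\lambda$ to $\pm 1$ and recognize the $\lambda=-1$ case as an element of $\Phi(\SL(V))$; part (ii) is comparatively routine once (i) is in place.
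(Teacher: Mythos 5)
Your part (i) is correct and follows essentially the same route as the paper: both inclusions, reduction modulo $\tau$ and $\Phi(\SL(V))$ to an automorphism fixing $\frsl(V)$ pointwise, then Schur's Lemma on $\bigwedge^4V$ together with $0\neq[\bigwedge^4V,\bigwedge^4V]\subseteq\frsl(V)$ to force the scalar to be $\pm1$ and to recognize the $-1$ case as $\sigma=\Phi(\veps I)$. Part (ii), however, is argued differently. The paper deduces (ii) from (i): it bounds $\Centr_{\tilde{\cG}}(\tilde\sigma)\subseteq\pi^{-1}\bigl(\Centr_{\Aut(\cL)}(\sigma)\bigr)=\tilde\Phi\bigl(\SL(V)\bigr)\langle\tilde\tau\rangle$ and then excludes the $\tilde\tau$-coset by invoking Theorem \ref{th:e7c4}(i), i.e., the nontorality of $\langle\sigma,\tau\rangle$ (which comes from the rank-$4$ fixed subalgebra $\frsp(V,b)$ via Lemma \ref{lm:3}), so $\tilde\sigma$ and $\tilde\tau$ cannot commute. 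You instead prove $\tilde\Phi\bigl(\SL(V)\bigr)\subseteq\Centr_{\tilde{\cG}}(\tilde\sigma)$ by the connectedness-of-the-commutator-map argument (the same device as in Lemma \ref{lm:1}) and then conclude equality from Steinberg's connectedness theorem plus the dimension count $\dim\Centr_{\tilde{\cG}}(\tilde\sigma)=\dim\cL^{\sigma}=\dim\frsl(V)=63=\dim\tilde\Phi\bigl(\SL(V)\bigr)$. Both arguments are valid (your $\tilde\sigma$ has finite order, hence is semisimple, so Steinberg applies, and in characteristic zero the Lie algebra of the centralizer is indeed the fixed subalgebra of $\Ad(\tilde\sigma)$, which coincides with $\Fix_{\cL}(\sigma)$). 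Your version of (ii) is self-contained and does not depend on (i) or on Theorem \ref{th:e7c4}(i), at the cost of importing the standard facts about centralizers of semisimple elements; the paper's version stays at the level of elementary group theory once the nontorality of $\langle\sigma,\tau\rangle$ is known, which it needs anyway for the computation of $\hat\beta^Q$.
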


\begin{proof}
It is clear that $\tau$ and the image of $\Phi$ are contained in $\Centr_{\Aut(\cL)}(\sigma)$. Let $\varphi$ be an automorphism of $\cL$ commuting with $\sigma$. Then $\varphi$ induces an automorphism of $\frsl(V)$. Composing with $\tau$ if needed, we may assume that $\varphi$ induces an inner automorphism of $\frsl(V)$. Hence there exists an element $f\in\SL(V)$ such that $\varphi(d)=fdf^{-1}$ for all $d\in\frsl(V)$. Then the restriction of $\varphi\circ\Phi(f)^{-1}$ to $\frsl(V)$ is the identity. By Schur's Lemma, this implies that $\varphi\circ\Phi(f)^{-1}$ is either identity or $\sigma$. But $\sigma=\Phi(\veps I)$ where $\veps$ is a primitive $8$-th root of unity. This completes the proof of (i).

The centralizer $\Centr_{\tilde{\cG}}(\tilde\sigma)$ is contained in $\pi^{-1}\bigl(\Centr_{\Aut(\cL)}(\sigma)\bigr)=\tilde{\Phi}\bigl(\SL(V)\bigr)\langle\tilde\tau\rangle$ by part (i), where $\tilde\tau$ is a preimage of $\tau$ under $\pi$. We may take $\tilde\sigma=\tilde\Phi(\veps I)$ and, by Theorem \ref{th:e7c4}(i), we already know that $\tilde\sigma$ and $\tilde\tau$ do not commute. Part (ii) follows.
\end{proof}

The restriction map
\[
\begin{split}
\varrho: \Centr_{\Aut(\cL)}(\sigma)&\longrightarrow \Aut(\frsl(V)),\\
       \varphi\quad &\mapsto\quad \varphi\vert_{\frsl(V)},
\end{split}
\]
is a surjective homomorphism with kernel $\langle\sigma\rangle$. Let $P$ be a maximal quasitorus of $\Aut(\frsl(V))$ such that $\varrho^{-1}(P)$ is abelian. Then $\varrho^{-1}(P)$ is a maximal quasitorus of $\Centr_{\Aut(\cL)}(\sigma)$, and hence of $\Aut(\cL)$, as any quasitorus containing $\sigma$ is contained in its centralizer. Through $\Phi$, we identify $\SL(V)/\langle \bi I\rangle$ with a subgroup of $\Centr_{\Aut(\cL)}(\sigma)$.

To construct a maximal quasitorus in $\Aut(\frsl(V))$, we factor $V$ as a tensor product $V=V_1\otimes V_2$ of a four-dimensional vector space $V_1$ and a two-dimensional vector space $V_2$. Let $X_1,Y_1$ be the generalized Pauli matrices (see Equation \eqref{Pauli}) for $V_1$, so $X_1^4=Y_1^4=I_1$ and $X_1Y_1=\bi Y_1X_1$, and let $X_2,Y_2$ be those for $V_2$, so $X_2^2=Y_2^2=I_2$ and $X_2Y_2=-Y_2X_2$, where $I_1$ and $I_2$ are the identity operators on $V_1$ and $V_2$, respectively. Note that the operators
\begin{equation}\label{eq:Z42Z23}
X_1\otimes I_2,\, Y_1\otimes I_2,\, I_1\otimes X_2,\, I_1\otimes Y_2,\text{ and }\veps I_1\otimes I_2
\end{equation}
are in $\SL(V)$. Their classes modulo $\langle \bi I\rangle$ generate a quasitorus $Q\simeq \ZZ_4^2\times\ZZ_2^3$ in $\SL(V)/\langle \bi I\rangle$, whose image under $\varrho$ is a maximal quasitorus $P\simeq\ZZ_4^2\times\ZZ_2^2$ in $\Aut(\frsl(V))$ --- see \cite[\S 3.3]{EK_mon} and also recall that $\veps I_1\otimes I_2=\veps I$ corresponds to $\sigma$. Hence, $Q=\varrho^{-1}(P)$ is a maximal quasitorus of $\Aut(\cL)$, and it gives rise to a fine grading on $\cL$ with universal group $\ZZ_4^2\times\ZZ_2^3$. 

Now use $\tilde\Phi$ to identify $\SL(V)/\langle -I\rangle$ with a subgroup of $\tilde{\cG}$. Then the classes of the elements in \eqref{eq:Z42Z23} modulo $\langle -I\rangle$ are preimages under $\pi$ of the corresponding generators of $Q$. It follows at once that the radical of $\hat\beta^Q$ is given by
\begin{equation}\label{eq:kerbZ42Z23}
\rad\hat\beta^Q=\left\langle [X_1^2\otimes I_2],[Y_1^2\otimes I_2]\right\rangle\times\left\langle [I_1\otimes X_2],[I_1\otimes Y_2]\rangle\times\langle [\veps I_1\otimes I_2]\right\rangle,
\end{equation}
where brackets denote classes modulo $\langle \bi I\rangle$. 
(In particular, we use the fact that $\hat\beta^Q\bigl([X_1\otimes I_2],[Y_1\otimes I_2]\bigr)=\tilde\Phi(\bi I)\ne 1$.)

In the same vein, we construct another maximal quasitorus of $\Aut(\cL)$ by taking the operators
\begin{equation*} %\label{eq:Z42ZZ2}
X_1\otimes I_2,\, Y_1\otimes I_2,\, 
I_1\otimes \left(\begin{smallmatrix} t&0\\ 0&t^{-1}\end{smallmatrix}\right)\text{ for all } t\in\FF^\times,\text{ and }\veps I_1\otimes I_2,
\end{equation*}
in $\SL(V)$. Their classes modulo $\langle \bi I\rangle$ generate a quasitorus $Q'\simeq\ZZ_4^2\times\FF^\times\times\ZZ_2$ in $\SL(V)/\langle \bi I\rangle$, whose image under $\varrho$ is a maximal quasitorus $P'\simeq\ZZ_4^2\times\FF^\times$ in $\Aut(\frsl(V))$. Hence, $Q'=\varrho^{-1}(P')$ is a maximal quasitorus of $\Aut(\cL)$, and it gives rise to a fine grading on $\cL$ with universal group $\ZZ_4^2\times\ZZ\times\ZZ_2$. In this case, we get:
\[
\rad\hat\beta^{Q'}=\left\langle [X_1^2\otimes I_2],[Y_1^2\otimes I_2]\right\rangle\times
\left\{\bigl[I_1\otimes \left(\begin{smallmatrix} t&0\\ 0&t^{-1}\end{smallmatrix}\right)\bigr]: t\in\FF^\times\right\}\times
\left\langle [\veps I_1\otimes I_2]\right\rangle.
\]
For both fine gradings, there is a distinguished subgroup $T\simeq\ZZ_2^2$ of the universal group (contained in the factor $\ZZ_4^2$) such that $T^\perp=\rad\hat{\beta}^Q$, respectively $T^\perp=\rad\hat{\beta}^{Q'}$.

\medskip

$\boxed{\ZZ^3\times\ZZ_2^3}$\quad 
This grading is easily described using Tits construction $\cT(\cC,\cJ)$, where $\cC$ is the Cayley algebra and $\cJ$ is the simple Jordan algebra of Hermitian $3\times 3$ matrices over the quaternion algebra. Recall again that $\cT(\cC,\cJ)$ is defined as the vector space direct sum
\[
\cL=\cT(\cC,\cJ)=\Der(\cC)\oplus(\cC_0\otimes\cJ_0)\oplus\Der(\cJ),
\]
where the subscript $0$ refers to the subspace of trace zero elements (see e.g. \cite[\S 6.2]{EK_mon}). 
In particular, the group $\Aut(\cC)\times\Aut(\cJ)$ is naturally embedded in $\Aut(\cL)$.
The grading in question is obtained by combining the fine $\ZZ^3$-grading on $\cJ$ (the Cartan grading, i.e., coming from the maximal torus of $\Aut(\cJ)\simeq\PSP_6(\FF)$, of type $C_3$) and the fine $\ZZ_2^3$-grading on $\cC$ (obtained from the triple iteration of the Cayley-Dickson doubling process). 
Thus, we are dealing with the quasitorus $Q=Q_1\times Q_2$ where $Q_2$ is a torus in $\Aut(\cJ)$ and $Q_1$ is contained in the simply connected group $\Aut(\cC)$ (type $G_2$). It follows, by Lemma \ref{lm:2}, that $\hat{\beta}^Q$ is trivial. 

\medskip

$\boxed{\ZZ_2^2\times\ZZ_3^3}$\quad Here we think of $\cL$ in terms of Tits construction $\cT(\cH,\cA)$, where $\cH$ is the quaternion algebra and $\cA$ is the Albert algebra, i.e., the Jordan algebra of $3\times 3$ Hermitian matrices over the Cayley algebra $\cC$:
\[
\cL=\cT(\cH,\cA)=\Der(\cH)\oplus(\cH_0\otimes\cA_0)\oplus\Der(\cA).
\]
Similarly to the previous case, the group $\Aut(\cH)\times\Aut(\cA)$ embeds into $\Aut(\cL)$ and the maximal quasitorus associated to our grading has the form $Q=Q_1\times Q_2$ where $Q_1$ lives in $\Aut(\cH)\simeq\PSL_2(\FF)$ and gives the $\ZZ_2^2$-grading on $\cH\simeq M_2(\FF)$ coming from the Pauli matrices (or, alternatively, from the double iteration of the Cayley-Dickson doubling process), and $Q_2$ lives in $\Aut(\cA)$ and gives the $\ZZ_3^3$-grading on $\cA$ that we already encountered in the proof of Theorem \ref{th:E6_outer}. The subalgebra of elements fixed by $Q_1$ is $\Der(\cA)$, of rank $4$ (type $F_4$), so $Q_1$ is not toral (Lemma \ref{lm:3}). We conclude that $Q_2$ is the radical of $\hat\beta^Q$. The distinguished subgroup $T=Q_2^\perp$ of the universal group $\ZZ_2^2\times\ZZ_3^3$ is just its $2$-torsion.

\medskip

$\boxed{\ZZ_2\times\ZZ_4^3}$\quad This last grading is best described in terms of the so-called \emph{Brown algebra}, which is a $56$-dimensional algebra with involution $(\cB,-)$, an example of a structurable algebra. We will follow here \cite{AEK} and \cite{Garibaldi}.

Given a unital algebra with involution $(\cA,-)$, consider the operators $V_{x,y},T_x\in\End_\FF(\cA)$ defined by
\[
V_{x,y}(z)\bydef\{ x,y,z\}=(x\bar y)z+(z\bar y)x-(z\bar x)y,\quad T_x\bydef V_{x,1}.
\]
$(\cA,-)$ is said to be a \emph{structurable algebra} if 
\[
[V_{x,y},V_{z,t}]=V_{V_{x,y}(z),t}-V_{z,V_{y,x}(t)}
\]
for all $x,y,z,t\in\cA$. This reduces to the definition of Jordan algebra if we take the involution to be the identity. The Brown algebra $\cB$ is just one step away from this situation, with the space of skew-symmetric elements being one-dimensional. Fix a skew-symmetric element $s_0\in\cB$ normalized to satisfy $s_0^2=1$. Then we can define a skew-symmetric bilinear form $(.\,,.)$ on $\cB$ by setting $x\bar y-y\bar x = (x,y) s_0$ for all $x,y\in\cB$; this form turns out to be nondegenerate. It is known that the simply connected group $\tilde\cG$ of type $E_7$ can be identified with $\SP\bigl(\cB,(.\,,.)\bigr)\cap\Aut\bigl(\cB,\{.\,,.\,,.\}\bigr)$, i.e., the group of all symplectic transformations that are, at the same time, automorphisms of the triple product. The algebra $\cB$ also has a symmetric bilinear trace form $\langle.\,,.\rangle$. Consider the subspaces $\cS=\{x\in\cB: \bar x=-x\}=\FF s_0$, $\cH=\{x\in\cB: \bar x=x\}$, $\cB_0=\{x\in\cB: \langle x,1\rangle=0\}$, and $\cH_0=\cH\cap\cB_0$. The \emph{structure Lie algebra} of $\cB$ is defined by 
\[
\frstr(\cB)=\espan{V_{x,y}: x,y\in\cB}=\Der(\cB)\oplus T_\cB\subset \frgl(\cB).
\]
Its derived subalgebra $\cL=\frstr(\cB)_0\bydef \Der(\cB)\oplus T_{\cB_0}$ is simple of type $E_7$.

The group of (involution-preserving) automorphisms $\Aut(\cB)$ is the semidirect product of the simply connected group of type $E_6$ (consisting of the automorphisms that fix $s_0$) and the cyclic group of order $2$ (generated by an automorphism that sends $s_0\mapsto-s_0$). Hence, $\Der(\cB)$ is the simple Lie algebra of type $E_6$. Since any automorphism of $\cB$ preserves the bilinear form $(.\,,.)$ and the triple product $\{.\,,.\,,.\}$, the group $\Aut(\cB)$ embeds into $\tilde\cG$. The natural projection $\pi:\tilde\cG\rightarrow\Aut(\cL)$ restricts to the natural action of $\Aut(\cB)$ on $\cL=\frstr(\cB)_0$: 
\[
\begin{split}
\pi\vert_{\Aut(\cB)}:\Aut(\cB)&\hookrightarrow \Aut(\cL)\\
  f\ &\mapsto\begin{cases} d\mapsto fdf^{-1}&\text{for }d\in \Der(\cB),\\
                           T_x\mapsto T_{f(x)}&\text{for }x\in\cB_0.
             \end{cases}
\end{split}
\]
By construction, $\cL=\frstr(\cB)_0$ is $\ZZ_2$-graded, with $\cL\subo=\Der(\cB)\oplus T_{\cS}$ and $\cL\subuno=T_{\cH_0}$. Let $\upsilon$ be the corresponding  automorphism: $\upsilon\vert_{\cL\subo}=\id$ and $\upsilon\vert_{\cL\subuno}=-\id$. 

The Brown algebra has a fine grading with universal group $\ZZ_4^3$. The corresponding quasitorus $\tilde Q_0$ is generated by three order $4$ elements $\sigma_1,\sigma_2,\sigma_3$, where the action of $\sigma_1$ on $\Der(\cB)$ is not inner, but the actions of $\sigma_1^2$, $\sigma_2$ and $\sigma_3$ are inner. Consider $Q_0=\pi(\tilde Q_0)$. This is a quasitorus in $\Aut(\cL)$ that does not contain $\upsilon$ but commutes with it, and the direct product $Q\bydef Q_0\times\langle\upsilon\rangle$ is the maximal quasitorus of $\Aut(\cL)$ giving the desired fine grading on $\cL$ with universal group $\ZZ_4^3\times\ZZ_2$.

\begin{proposition}\label{pr:Brown}
Consider the quasitorus $Q=Q_0\times\langle\upsilon\rangle\cong\ZZ_4^3\times\ZZ_2$ in $\Aut(\cL)$.
\begin{romanenumerate}
\item The subgroup $\langle\sigma_1,\upsilon\rangle$ of $\Aut(\cL)$ is nontoral, so $\hat\beta^Q(\sigma_1,\tau)\ne 1$.
\item $\rad\hat\beta^Q=\langle \sigma_1^2,\sigma_2,\sigma_3\rangle$, 
which is the subgroup of the elements of $Q_0$ that fix $s_0$ (equivalently, whose action on $\Der(\cB)$ is inner).
\end{romanenumerate}
\end{proposition}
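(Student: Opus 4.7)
For part (i), my plan is to invoke Lemma~\ref{lm:3} by estimating the rank of $\Fix_\cL(\sigma_1,\upsilon)$. The element $\upsilon$ acts as the identity on $\cL\subo=\Der(\cB)\oplus T_\cS$, so $\Fix_\cL(\sigma_1,\upsilon)=\Fix_{\cL\subo}(\sigma_1)$. Since $\sigma_1$ acts non-innerly on $\Der(\cB)$, its preimage in $\Aut(\cB)\simeq E_6\rtimes\ZZ_2$ lies in the non-identity coset, which the setup characterizes as the one sending $s_0\mapsto -s_0$; hence $\sigma_1$ acts as $-\id$ on $T_\cS=\FF T_{s_0}$, and $\Fix_\cL(\sigma_1,\upsilon)=\Fix_{\Der(\cB)}(\sigma_1)$, whose rank is at most $\rank(\Der(\cB))=6<7=\rank(\cL)$. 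Lemma~\ref{lm:3} then yields that $\langle\sigma_1,\upsilon\rangle$ is nontoral, so $\hat\beta^Q(\sigma_1,\upsilon)\ne 1$.

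For part (ii), I plan to prove the two inclusions separately. To show $\langle\sigma_1^2,\sigma_2,\sigma_3\rangle\subseteq\rad\hat\beta^Q$, I take any $x$ in this subgroup. For any $y\in Q_0$, both $x$ and $y$ lift to commuting elements of the abelian quasitorus $\tilde Q_0\subset\Aut(\cB)\subset\tilde\cG$, so $\hat\beta^Q(x,y)=1$. For $y=\upsilon$, I will again apply Lemma~\ref{lm:3}: since $x$ acts innerly on $\Der(\cB)$, its preimage in $\Aut(\cB)$ lies in the connected simply connected component $\Aut(\cB)^\circ\simeq E_6$, and being semisimple of finite order it sits in some maximal torus $\cT$ of that $E_6$. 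Then $\Lie(\cT)\oplus\FF T_{s_0}\subset\cL\subo$ is a $7$-dimensional abelian toral subalgebra of $\cL$---hence a Cartan---and is fixed pointwise by both $x$ and $\upsilon$, so $\langle x,\upsilon\rangle$ is toral.

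For the reverse inclusion I will exploit the bihomomorphism and alternating properties of $\hat\beta^Q$. Writing $g\in Q$ as $g=\sigma_1^a\sigma_2^b\sigma_3^c\upsilon^d$, the preceding paragraph gives triviality of $\hat\beta^Q$ on $Q_0\times Q_0$ and $\hat\beta^Q(\sigma_i,\upsilon)=1$ for $i=2,3$, so
\[
\hat\beta^Q(g,\upsilon)=\hat\beta^Q(\sigma_1,\upsilon)^a.
\]
By part (i), $\hat\beta^Q(\sigma_1,\upsilon)$ is a nontrivial, hence order-$2$, element of $Z(\tilde\cG)\simeq\ZZ_2$, so this vanishes iff $a$ is even. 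An analogous calculation gives
\[
\hat\beta^Q(g,\sigma_1)=\hat\beta^Q(\upsilon,\sigma_1)^d=\hat\beta^Q(\sigma_1,\upsilon)^{-d},
\]
which vanishes iff $d=0$. Thus $g\in\rad\hat\beta^Q$ forces $a$ even and $d=0$, placing $g\in\langle\sigma_1^2,\sigma_2,\sigma_3\rangle$ as required.

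The most delicate step I foresee is producing a $7$-dimensional jointly fixed toral subalgebra for $(x,\upsilon)$ in the second part of (ii); once one observes that the inner-acting preimage of $x$ lies in the simply connected $E_6$ which fixes $s_0$, the Cartan $\Lie(\cT)\oplus\FF T_{s_0}\subset\cL\subo$ is the natural candidate, and the remainder of the argument is a routine bihomomorphism computation.
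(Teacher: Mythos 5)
Your proposal is correct and follows essentially the same route as the paper: both rest on Lemma~\ref{lm:3} applied to $\Fix_{\cL}(\pi(f),\upsilon)=\Fix_{\cL\subo}(\pi(f))$, with the dichotomy $f(s_0)=\pm s_0$ (equivalently, inner versus outer action on $\Der(\cB)$) deciding whether this fixed subalgebra can contain a rank-$7$ Cartan subalgebra, together with the isotropy of $Q_0$ coming from the abelian lift $\tilde Q_0\subset\tilde\cG$. The only difference is cosmetic: the paper treats all semisimple $f\in\Aut(\cB)$ in one stroke and leaves the final bihomomorphism bookkeeping implicit, whereas you spell it out (and correctly read the statement's $\hat\beta^Q(\sigma_1,\tau)$ as $\hat\beta^Q(\sigma_1,\upsilon)$).
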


\begin{proof}
We first note that $Q_0$ is isotropic for $\hat\beta^Q$, as it is the image under the natural projection $\pi:\tilde\cG\rightarrow \Aut(\cL)$ of the abelian subgroup $\tilde Q_0$. 

Now consider $f\in\Aut(\cB)$. Then $\vphi\bydef\pi(f)$ commutes with $\upsilon$, and we have 
\[
\Fix_{\cL}(\vphi,\upsilon)=\Fix_{\cL_{\subo}}(\vphi)=
\begin{cases}
\Fix_{\Der(\cB)}(\vphi)\oplus T_{\cS}&\text{if }f(s_0)=s_0,\\
\Fix_{\Der(\cB)}(\vphi)&\text{if }f(s_0)=-s_0.
\end{cases}
\]
Suppose $f$ is semisimple. Then $\Fix_{\Der(\cB)}(\vphi)$ is a reductive Lie algebra. If $f(s_0)=s_0$ then $\vphi$ acts on $\Der(\cB)$ as an inner automorphism, so $\Fix_{\Der(\cB)}(\vphi)$ has rank $6$ and hence $\Fix_{\cL}(\vphi,\upsilon)$ has rank $7$ ($T_\cS$ being the one-dimensional center of $\cL\subo$). By Lemma \ref{lm:3}, this means that the subgroup $\langle\vphi,\upsilon\rangle$ of $\Aut(\cL)$ is toral. On the other hand, if $f(s_0)=-s_0$ then $\vphi$ acts on $\Der(\cB)$ as an outer automorphism and hence $\Fix_{\cL}(\vphi,\upsilon)$ has rank $<6$, so the subgroup $\langle\vphi,\upsilon\rangle$ of $\Aut(\cL)$ is nontoral. Both parts (i) and (ii) follow.
\end{proof}

Let $h$ be the degree of the element $s_0$ with respect to our $\ZZ_4^3$-grading on the Brown algebra $\cB$. It is the distinguished element of the corresponding $\ZZ_4^3$-grading on the Lie algebra $\Der(\cB)$ of type $E_6$, i.e., it is the order $2$ element characterized by the property that the grading on $\Der(\cB)$ induced by the natural homomorphism $\ZZ_4^3\to\ZZ_4^3/\langle h\rangle$ is inner. Proposition \ref{pr:Brown} tells us that $\rad\hat\beta^Q=T^\perp$ where $T$ is the subgroup of the universal group $\ZZ_2\times\ZZ_4^3$ generated by the factor $\ZZ_2$ and the distinguished element $h$ in the factor $\ZZ_4^3$.

\medskip

Finally, we are going to summarize our results on fine gradings for $E_7$. We will fix the following enumeration of simple roots $\{\alpha_1,\ldots,\alpha_7\}$ and the corresponding fundamental weights $\{\fw_1,\ldots,\fw_7\}$:

\[\xymatrix{
\bullet\ar@{-}[]+0;[r]+0^<{\alpha_1}  & \bullet\ar@{-}[]+0;[r]+0^<{\alpha_2} & \bullet\ar@{-}[]+0;[r]+0^<{\alpha_3} & \bullet\ar@{-}[]+0;[r]+0^<{\alpha_4}\ar@{-}[]+0;[d]+0^>{\alpha_7}  & \bullet\ar@{-}[]+0;[r]+0^<{\alpha_5} & \bullet\ar@{}[]+0;[r]^<{\alpha_6} & \\
& & & \bullet & &
}
\]

\begin{theorem}\label{th:E7}
Let $\cL$ be the simple Lie algebra of type $E_7$. For the fine gradings on $\cL$ with universal groups $\ZZ^7$, $\ZZ^3\times\ZZ_2^3$ and $\ZZ\times\ZZ_3^3$, the graded Brauer invariants of all $\lambda\in\Lambda^+$ are trivial. For the remaining fine gradings, the graded Brauer invariant of $\lambda=\sum_{i=1}^7 m_i\fw_i\in\Lambda^+$ is trivial if $m_1+m_3+m_7\equiv 0\pmod{2}$ and has support $T$ otherwise, where $T\simeq\ZZ_2^2$ is the distinguished subgroup of the universal group: the 2-torsion subgroup in the case of $\ZZ_2^2\times\ZZ_3^3$, the $2$-periodic elements inside $\ZZ_4^2$ in the case of $\ZZ\times\ZZ_2\times\ZZ_4^2$ or $\ZZ_2^3\times\ZZ_4^2$, the subgroup generated by the factor $\ZZ_2$ and the distinguished element $h\in\ZZ_4^3$ in the case of $\ZZ_2\times\ZZ_4^3$, and the factor $\ZZ_2^2$ associated to the automorphisms $\sigma$ and $\tau$ of $\cL$ as in Theorem \ref{th:e7c4} in the remaining cases of $\ZZ^4\times\ZZ_2^2$,  $\ZZ^2\times\ZZ_2^4$, $\ZZ\times\ZZ_2^5$, $\ZZ\times\ZZ_2^6$, $\ZZ_2^7$, $\ZZ_2^5\times\ZZ_4$ or $\ZZ_2^8$.
\end{theorem}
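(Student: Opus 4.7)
The plan is to mirror the argument used to establish Theorem~\ref{th:E6_inner}. By Equations~\eqref{eq:bichar_for_Q} and~\eqref{eq:def_beta_quasitorus} we have $\hat\beta^Q_\lambda(x,y)=\Psi_\lambda\bigl(\hat\beta^Q(x,y)\bigr)$ for every $\lambda\in\Lambda^+$ and every pair $x,y\in Q$. The radical of $\hat\beta^Q$ has already been pinned down case by case in the preceding subsection, so the remaining task is to describe the character $\Psi_\lambda\colon Z(\tilde\cG)\to\FF^\times$ and then to combine the two pieces of information.

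First I would dispose of the three gradings claimed to be trivial. For universal groups $\ZZ^7$ and $\ZZ\times\ZZ_3^3$, there is no $2$-torsion and so Corollary~\ref{cor:Brauer_E6E7} gives $\hat\beta^Q_\lambda=1$ for every $\lambda$ at once. For the fine grading with universal group $\ZZ^3\times\ZZ_2^3$, we have already observed that the quasitorus $Q=Q_1\times Q_2$ lies inside $\Aut(\cC)\times\Aut(\cJ)$, with $\Aut(\cC)$ simply connected (of type $G_2$) and $Q_2$ a torus of $\Aut(\cJ)$. Lemma~\ref{lm:2} shows that any two elements of $Q_1$ already strongly commute inside $\Aut(\cC)$ and Lemma~\ref{lm:1} takes care of the toral factor, so $\hat\beta^Q=1$ and consequently $\hat\beta^Q_\lambda=1$ for every $\lambda$.

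For the remaining eleven fine gradings I would now compute $\Psi_\lambda$. Since $Z(\tilde\cG)\simeq\ZZ_2$ for type $E_7$, the multiplicative map $\lambda\mapsto\Psi_\lambda$ descends to an isomorphism $\Lambda/\Lambda^\mathrm{r}\stackrel{\sim}{\longrightarrow}\wh{Z(\tilde\cG)}$. Using the inverse Cartan matrix of $E_7$ in the enumeration of simple roots and fundamental weights fixed immediately before the theorem, a short direct calculation verifies that $\fw_2,\fw_4,\fw_5,\fw_6\in\Lambda^\mathrm{r}$ while the cosets of $\fw_1,\fw_3,\fw_7$ each coincide with the nontrivial element of $\Lambda/\Lambda^\mathrm{r}$. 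Hence, for $\lambda=\sum_{i=1}^{7}m_i\fw_i$, we obtain $\Psi_\lambda=\Psi_{\fw_1}^{m_1+m_3+m_7}$, where $\Psi_{\fw_1}$ is the unique injection $\ZZ_2\hookrightarrow\FF^\times$.

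Combining the two ingredients yields $\hat\beta^Q_\lambda=(\hat\beta_1)^{m_1+m_3+m_7}$, where $\hat\beta_1$ stands for the Brauer invariant attached to $\fw_1$. When $m_1+m_3+m_7$ is even this vanishes; when it is odd, the injectivity of $\Psi_{\fw_1}$ yields $\rad\hat\beta^Q_\lambda=\rad\hat\beta^Q=T^\perp$, and therefore the support of the Brauer class equals precisely $T$, with $T\simeq\ZZ_2^2$ the distinguished subgroup identified in each of the eleven radical computations given above. The only real obstacle lies in the earlier step-by-step radical computations---especially the seven cases covered by Theorem~\ref{th:e7c4}(iii), the two $\ZZ_4^2$-related gradings built from $\frsl_8(\FF)$, and the $\ZZ_2\times\ZZ_4^3$-grading coming from the Brown algebra---which have already been carried out; once those are in hand, the passage from $\hat\beta^Q$ to $\hat\beta^Q_\lambda$ is essentially formal, and the verification of which $\fw_i$ lie in $\Lambda^\mathrm{r}$ for the paper's (non-Bourbaki) labelling is the only remaining routine check.
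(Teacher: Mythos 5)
Your proposal is correct and follows essentially the same route as the paper: it takes the case-by-case radical computations of $\hat\beta^Q$ from the preceding subsection as given, invokes $\hat\beta^Q_\lambda=\Psi_\lambda\circ\hat\beta^Q$, and reduces everything to the observation that $\fw_1\equiv\fw_3\equiv\fw_7$ generate $\Lambda/\Lambda^{\mathrm{r}}\simeq\ZZ_2$ while the other fundamental weights lie in $\Lambda^{\mathrm{r}}$, so that $\Psi_\lambda=\Psi_{\fw_1}^{m_1+m_3+m_7}$ with $\Psi_{\fw_1}$ injective. This matches the paper's proof, including the correct identification of which $\fw_i$ lie in the root lattice for the paper's labelling of the Dynkin diagram.
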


\begin{proof}
We have already obtained the radical of $\hat{\beta}^Q$ and its perp in all cases, so it remains to invoke the formula  $\hat{\beta}^Q_\lambda(x,y)=\Psi_\lambda(\hat{\beta}^Q(x,y))$, for all $x,y\in Q$ and $\lambda\in\Lambda^+$, and compute the character $\Psi_\lambda$ of the center $Z$ of the simply connected group of type $E_7$. Recall that the mapping $\Lambda^+\to\wh{Z},\,\lambda\mapsto\Psi_\lambda$, is multiplicative and trivial for $\lambda\in\Lambda^\mathrm{r}$. 
For type $E_7$, the coset of $\fw_1$ generates $\Lambda/\Lambda^\mathrm{r}\simeq\ZZ_2$, and we have $\fw_1\equiv\fw_3\equiv\fw_7$ and $\fw_2\equiv\fw_4\equiv\fw_5\equiv\fw_6\equiv 0$ modulo $\Lambda^\mathrm{r}$. 
Therefore, $\Psi_\lambda=\Psi_{\fw_1}^{m_1+m_3+m_7}$ and hence $\hat{\beta}^Q_\lambda=\hat{\beta}_1^{m_1+m_3+m_7}$ where $\hat{\beta}_1$ is the graded Brauer invariant of $\fw_1$, which is the highest weight of the nontrivial simple $\cL$-module of lowest dimension (namely, the Brown algebra of dimension $56$). Since $\Psi_{\fw_1}$ is injective, the radical of $\hat{\beta}_1$ is the same as the radical of $\hat{\beta}^Q$. The result follows.
\end{proof}

\begin{corollary}
Let $\cL$ be the simple Lie algebra of type $E_7$, equipped with a $G$-grading $\Gamma$ that is induced from one of the fine gradings by a homomorphism $\nu\colon G^u\to G$ where $G^u$ is the universal group of the fine grading. If $G^u$ is not isomorphic to $\ZZ^7$, $\ZZ^3\times\ZZ_2^3$ or $\ZZ\times\ZZ_3^3$, let $T\simeq\ZZ_2^2$ be the distinguished subgroup of $G^u$ indicated in Theorem \ref{th:E7}. 
\begin{enumerate}
\item[(a)] If $G^u$ is isomorphic to $\ZZ^7$, $\ZZ^3\times\ZZ_2^3$ or $\ZZ\times\ZZ_3^3$ or if $\nu$ is not injective on $T$, then every finite-dimensional $\cL$-module admits a compatible $G$-grading. 
\item[(b)] If $\nu$ is injective on $T$ then a finite-dimensional $\cL$-module $V$ admits a compatible $G$-grading if and only if, for any $\lambda=\sum_{i=1}^7 m_i\fw_i\in\Lambda^+$ satisfying $m_1+m_3+m_7\not\equiv 0\pmod{2}$, the multiplicity of the simple $\cL$-module $V(\lambda)$ in $V$ is divisible by $2$.\qed
\end{enumerate}
\end{corollary}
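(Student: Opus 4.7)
Since the Dynkin diagram of $E_7$ has trivial automorphism group, for every $\lambda\in\Lambda^+$ we have $K_\lambda=\wh{G}$ and $H_\lambda=\{e\}$. Hence a simple module $V(\lambda)$ admits a compatible $G$-grading iff its graded Brauer invariant $\hat{\beta}_\lambda\in B_G(\FF)$ is trivial, and in general the smallest $k$ for which $V(\lambda)^k$ admits a compatible $G$-grading equals the graded Schur index of $\lambda$. Any finite-dimensional $\cL$-module decomposes as $\bigoplus_{\lambda\in\Lambda^+}V(\lambda)^{n_\lambda}$, and it admits a compatible $G$-grading iff each isotypic component $V(\lambda)^{n_\lambda}$ does, i.e.\ iff $n_\lambda$ is divisible by the graded Schur index of $\lambda$ for every $\lambda$. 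This reduces the corollary to identifying, for each $\lambda$, when $\hat{\beta}_\lambda$ is trivial and, when nontrivial, its graded Schur index.

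Using $\hat{\beta}_\lambda(\chi_1,\chi_2)=\hat{\beta}^Q_\lambda(\alpha_{\chi_1},\alpha_{\chi_2})$ together with Theorem~\ref{th:E7}, I would distinguish cases by the universal group $G^u$. For $G^u\in\{\ZZ^7,\,\ZZ^3\times\ZZ_2^3,\,\ZZ\times\ZZ_3^3\}$, Theorem~\ref{th:E7} gives $\hat{\beta}^Q_\lambda$ trivial for all $\lambda$, hence $\hat{\beta}_\lambda$ is also trivial and (a) holds in these cases. In every remaining case, Theorem~\ref{th:E7} says $\hat{\beta}^Q_\lambda$ has radical $T^\perp$ when $m_1+m_3+m_7$ is odd and is trivial otherwise. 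Pulling back along $\hat{\nu}\colon\wh{G}\to Q$ (dual to $\nu\colon G^u\to G$), one checks that the radical of $\hat{\beta}_\lambda$ equals $\hat{\nu}^{-1}(T^\perp)=\nu(T)^\perp\subset\wh{G}$, so the support of the associated Brauer class in $G$ is $\nu(T)$.

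The crux is then a dichotomy on $\nu|_T$, using that $T\simeq\ZZ_2^2$ carries a nondegenerate alternating form. If $\nu|_T$ is not injective, $\nu(T)$ is cyclic of order at most $2$; every alternating bicharacter on a cyclic group is trivial, so $\hat{\beta}_\lambda$ is trivial for every $\lambda$, which together with the first paragraph proves the remaining part of (a). If on the other hand $\nu|_T$ is injective, then $\nu(T)\simeq\ZZ_2^2$ inherits the original nondegenerate form, so $\hat{\beta}_\lambda$ is nontrivial precisely when $m_1+m_3+m_7$ is odd, and then its graded Schur index is $2$, giving (b).

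I do not expect a substantive obstacle: the argument mirrors the $E_6$ inner corollary, with the prime $2$ replacing $3$ and the weight condition $m_1+m_3+m_7\pmod 2$ replacing $m_1-m_2+m_4-m_5\pmod 3$. The only technical point worth care is the identification of the radical of the pulled-back bicharacter as $\nu(T)^\perp$, which is a routine consequence of the duality between quasitori and their character groups.
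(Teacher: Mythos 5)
Your argument is correct and follows essentially the same route the paper takes for the analogous $E_6$ corollary (the paper leaves the $E_7$ proof to the reader): pull $\hat\beta^Q_\lambda$ back along the dual of $\nu$, note triviality unless $\nu$ is injective on $T$, and in that case read off support $\nu(T)\simeq\ZZ_2^2$ and graded Schur index $2$. One small imprecision: the identity $\rad\hat\beta_\lambda=\nu(T)^\perp$ asserted in your second paragraph holds only when $\nu|_T$ is injective (otherwise the image of $\wh{G}$ in $Q/T^\perp$ is a proper, hence isotropic, subgroup and the radical is all of $\wh{G}$), but since your third paragraph handles the non-injective case separately and reaches the right conclusion, nothing is lost.
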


\bigskip

%-------------------------------------------
%
%-------------------------------------------

%-------------------------------------------


\begin{thebibliography}{ABFP08}

\bibitem[Ada96]{Adams} 
J.F.~Adams, \emph{Lectures on exceptional Lie groups}, Chicago Lectures in Mathematics, University of Chicago Press, Chicago, IL, 1996.

\bibitem[AEK14]{AEK} D.~Aranda-Orna, A.~Elduque, and M.~Kochetov, \emph{A $\ZZ_4^3$-grading on a $56$-dimensional simple structurable algebra and related fine gradings on the simple Lie algebras of type $E$}, Comment. Math. Univ. Carolin. \textbf{55} (2014), no. 3, 285--313.

\bibitem[BK10]{BK_classical}
Yu.~Bahturin and M.~Kochetov, \emph{Classification of group gradings on simple Lie algebras of types $A$, $B$, $C$ and $D$}, J.~Algebra \textbf{324} (2010), no.~11, 2971--2989.

\bibitem[Car85]{Carter}
R.W.~Carter, \emph{Finite groups of {L}ie type. {C}onjugacy classes and complex characters}, Pure and Applied Mathematics (New York). A Wiley-Interscience Publication. John Wiley \& Sons, Inc., New York, 1985. 

\bibitem[DE16]{DE_over}
C.~Draper and A.~Elduque, \emph{An overview of fine gradings on simple {L}ie algebras}, Note Mat.~\textbf{36} (2016), 15--34. %(Preprint arXiv:1603.09482.)  

\bibitem[DV16]{DV_E6}
C.~Draper and A.~Viruel, \emph{Fine gradings on $\mathfrak{e}_6$}, Publ. Mat. \textbf{60} (2016), no. 1, 113--170.

\bibitem[Eld10]{EldFine} 
A.~Elduque, \emph{Fine gradings on simple classical {L}ie algebras}, J. Algebra \textbf{324} (2010), no.~12, 3532--3571. %\MR{2735398}

\bibitem[Eld16]{E_char0}
A.~Elduque, \emph{Gradings on algebras over algebraically closed fields}, in \emph{Non-Associative \& Non-Commutative Algebra and Operator Theory.
NANCAOT, Dakar, Senegal, May 23--25, 2014: Workshop in Honor of Professor Amin Kaidi.}
Gueye, Cheikh Thiecoumbe, Molina, Mercedes Siles (Eds.). Springer Proceedings in Mathematics and Statistics \textbf{160} (2016), 113--121. % (in press). (Preprint arXiv: 1407.0480.)

\bibitem[EK13]{EK_mon} 
A.~Elduque and M.~Kochetov, \emph{Gradings on simple {L}ie algebras}, Mathematical Surveys and Monographs \textbf{189},  American Mathematical Society, Providence, RI, 2013.

\bibitem[EK15a]{EK_Israel}
A.~Elduque and M.~Kochetov, \emph{Graded modules over classical simple Lie algebras with a grading}, Israel J. Math. \textbf{207} (2015), no.~1, 229--280.

\bibitem[EK15b]{EK_D4}
A.~Elduque and M.~Kochetov, \emph{Gradings on the Lie algebra $D_4$ revisited}, J.~Algebra \textbf{441} (2015), 441--474.

\bibitem[EK16]{EK_loop}
A.~Elduque and M.~Kochetov, \emph{Graded simple modules and loop modules}, to appear in Contemporary Mathematics, volume on ``Groups, Rings, Group Rings and Hopf Algebras'', dedicated to Don Passman. (Preprint  arXiv:1601.03008v2.)

\bibitem[FH91]{FH}
W.~Fulton and J.~Harris, \emph{Representation Theory. A First Course},  Graduate Texts in Mathematics, \textbf{129}. Readings in Mathematics. Springer-Verlag, New York, 1991.

\bibitem[Gar01]{Garibaldi}
R.S.~Garibaldi, \emph{Structurable algebras and groups of type $E_6$ and $E_7$},
J. Algebra \textbf{236} (2001), no.~2, 651--691.


\bibitem[MZpr]{MZ} V.~Mazorchuk and K.~Zhao, \emph{Graded simple Lie algebras and graded simple representations}, arXiv:1504.05114v3.

\bibitem[PP70]{PP}
D.J.~Picco and M.I.~Platzeck, \emph{Graded algebras and Galois extensions}, 
Collection of articles dedicated to Alberto Gonz\'alez Dom\'{\i}nguez on his sixty-fifth birthday. 
Rev. Un. Mat. Argentina \textbf{25} (1970/71), 401--415. 

\bibitem[Yu16]{Yu_E}
J.~Yu, \emph{Maximal abelian subgroups of compact simple Lie groups of type $E$}, Geom. Dedicata \textbf{185} (2016), 205-269. %, DOI 10.1007/s10711-016-0175-z.
%preprint arXiv:1403.2680.

\end{thebibliography}
\end{document}